\newcommand{\eps}{\varepsilon}
\newcommand{\R}{\mathbb{R}}
\newcommand{\Z}{\mathbb{Z}}
\newcommand{\C}{\mathbb{C}}
\newcommand{\N}{\mathbb{N}}
\newcommand{\Id}{\mathrm{Id}}
\newcommand{\rmi}{\mathrm{i}}
\newcommand{\rme}{\mathrm{e}}
\newcommand{\rmO}{\mathrm{O}}
\newtheorem{Lemma}{Lemma}[section]
\newtheorem*{Lemma*}{Lemma}
\newtheorem{Theorem}{Theorem}
\newtheorem*{Theorem*}{Theorem}
\newtheorem{Proposition}[Lemma]{Proposition}
\newtheorem*{Proposition*}{Proposition}
\newtheorem{Corollary}[Lemma]{Corollary}
\newtheorem{Definition}[Lemma]{Definition}
\newtheorem*{Hypothesis*}{Hypothesis}
 \title[Fredholm properties of singular elliptic operators]{Fredholm properties of singular elliptic operators arising in the study of point defects}
\author{Gabriela Jaramillo }
\address{University of Houston, Department of Mathematics, 3551 Cullen Blvd, Room 641, Houston, TX 77204.}
\subjclass[2020]{46N20, 35J10. }
\thanks{{\it Keywords:} Fredholm maps, elliptic operators, algebraically weighted spaces, defects .}
\email{gabriela@math.uh.edu}
\thanks{This work is supported in part by the National Science
  Foundation under grant DMS- 2307500.}
\begin{document}
\maketitle

\begin{abstract}

Motivated by the dynamics of defects in planar pattern-forming systems, we study Fredholm properties of elliptic operators with singular coefficients in weighted Sobolev spaces.  In particular, we consider a family of doubly weighted spaces that encode algebraic decay/growth of functions at infinity, and near the origin.  Our results give conditions on the weights under which the operators are either injective, surjective, or isomorphisms. We also give a precise description of the kernel and range of these operators.

\end{abstract}

\section{Introduction}
In this paper we investigate properties of the following bounded elliptic operators with singular potentials,

\begin{eqnarray}
 \label{e:Laplace-I}
 \Delta  -1: & H^2_{\sigma,\gamma}(\R^2) \longrightarrow L^2_{\sigma+2, \gamma+2}(\R^2),\\[2ex]
 \label{e:Laplace-1/r-I}
  \Delta - \frac{1}{r^2} -1: & H^2_{\sigma,\gamma}(\R^2) \longrightarrow L^2_{\sigma+2, \gamma+2}(\R^2),\\[2ex]
  \label{e:Laplace0}
 \Delta -1/r^2: & M^{2,2}_{\sigma,\gamma}(\R^2) \longrightarrow L^2_{\sigma+2, \gamma+2}(\R^2).
\end{eqnarray}

Here $H^k_{\sigma, \gamma}(\R^2)$ and $M^{s,p}_{\sigma, \gamma}(\R^2)$ are doubly weighted Sobolev spaces, defined as follows:

\begin{Definition}
Let $\langle x \rangle = (1 +|x|^2)^{1/2}$ and $b(r) \in C^\infty(\R_+)$ with,
\[b(r) = \left \{ \begin{array}{c c c}
1 & \mbox{for} & r>2\\
r & \mbox{for} & r<1.\\
\end{array} \right. \]
Define,
\begin{enumerate}[i)]
\item $M^{s,p}_{\sigma, \gamma}(\R^2, \C)$ as the completion of $C^\infty_0(\R^2,\C)$ functions under the norm
\[ \|u\|_{M^{s,p}_{\sigma, \gamma}} = \sum_{|\alpha|\leq s} \| D^\alpha u \; b(|x|)^{\sigma +|\alpha|} \langle x \rangle^{\gamma +|\alpha|} \|_{L^p}, \]
and
\item  $H^{s}_{\sigma, \gamma}(\R^2, \C)$ as the completion of $C^\infty_0(\R^2,\C)$ functions under the norm
\[ \|u\|_{H^{s}_{\sigma, \gamma}} = \sum_{|\alpha|\leq s} \| D^\alpha u \; b(|x|)^{\sigma +|\alpha|} \langle x \rangle^{\gamma } \|_{L^2}. \]

\end{enumerate}
\end{Definition}

Our results give conditions on the weights $\sigma$ and $\gamma$, under which  the operators \eqref{e:Laplace-I}, \eqref{e:Laplace-1/r-I}, and \eqref{e:Laplace0} are Fredholm mappings,  i.e have closed range, finite dimensional kernel and cokernel. We also provide an explicit description of the kernel and cokernel for these operators.
These results are summarized in  Theorems \ref{t:helmholtz} and \ref{t:Laplace} stated at the end of this introduction and in Corollary \ref{c:Fredholm_helmholtz} stated in Section \ref{s:helmholtz}.

Our motivation for considering the above elliptic operators  comes from pattern forming systems where point defects play a crucial role in shaping otherwise uniform patterns.
  For instance, these elliptic maps appear when using perturbation methods to study point defects in planar systems, including dislocations and vortices  in the real Ginzburg-Landau equation  \cite{chiron2021, pismen1990, weinan1994}, and spiral waves its  complex counterpart \cite{greenberg1980, greenberg1981, kopell1981, aguareles2023rigorous}.  
The results presented here also have implications for more general model equations like the Swift-Hohenberg equation, which describes the formation of striped and spot patterns in systems undergoing a Turing bifurcation. Indeed, Fredholm properties for related elliptic operators were used in \cite{Jaramillo2019, scheel2024pinning} to determine how spatially localized impurities can alter striped patterns.

As a first  concrete example of where these operators appear, consider the real Ginzburg-Landau equation,
\[ \partial_t A = \Delta A + A - |A|^2 A, \quad x \in \R^2, \quad A \in \C.\]
 In the context of liquid crystals,  this amplitude equation describes the emergence of roll patterns near the onset of convection.
Vortex solutions, $A_*(r, \theta) = \rho(r) \rme^{\rmi \theta}$, where $\rho(r)$ satisfies the equation
\begin{equation}\label{e:herve}
 \partial_{rr} \rho + \frac{1}{r} \partial_r \rho - \frac{1}{r^2} \rho + \rho - \rho^3=0, \quad \rho(0) = 0, \quad \rho(\infty) =1,
 \end{equation}
 then help us describe  dislocations. Indeed, while vortices themselves are stationary solutions,  they can travel in a direction parallel to rolls when embedded in a uniform striped pattern. As they move, these defects modify the pattern allowing solutions with different far-field wave-numbers to co-exist, creating a dislocation.

To analyze these point defects, one can view them as particles interacting with a `force field' described by the roll pattern. The rules that govern these interactions can then be understood through matching short scale solutions near the defect's core with far-field approximations of the modified roll pattern.
To derive these approximations one can look first for stationary solutions which are perturbations of a vortex,
leading one to study the linearization, $L$,  of the real Ginzburg-Landau equation about $A_*$. 
Or, equivalently,
\begin{equation}\label{e:linearGL}
 \rme^{-\rmi \theta} L \rme^{\rmi \theta} = \mathcal{L} = 
\begin{bmatrix}
\Delta - \frac{1}{r^2} & -\frac{2}{r^2} \partial_\theta\\[2ex]
\frac{2}{r^2} \partial_\theta & \Delta - \frac{1}{r^2} 
\end{bmatrix} 
+
\begin{bmatrix}
1- 3 \rho^2 & 0 \\[2ex]
0 & 1- \rho^2
\end{bmatrix} 
\end{equation}
(see \cite{pacard2012linear} for a derivation of the above linearized operator).

The elliptic operators referenced in expressions \eqref{e:Laplace-I} and \eqref{e:Laplace0}  then emerge when one takes into account the behavior of solutions, $\rho$, of equation \eqref{e:herve} near the core and at infinity. 
More precisely, using the expansions,
\begin{align*}
 \rho(r) & \sim a r - \frac{a}{8} r^3 + \rmO(r^5) \quad \mbox{as} \quad r\to 0\\
 1- \rho(r)^2 & \sim \frac{1}{r^2}  + \rmO(1/r^4) \quad \mbox{as } \quad r \to \infty
 \end{align*}
where  $a$ is a positive constant (see \cite{herve1994etude}), one finds that on bounded sets containing the origin the operator in expression \eqref{e:linearGL} corresponds to a compact perturbation of
\[
\mathcal{L}_0 =  \begin{bmatrix}
\Delta - \frac{1}{r^2} & -\frac{2}{r^2} \partial_\theta\\[2ex]
\frac{2}{r^2} \partial_\theta & \Delta - \frac{1}{r^2} 
\end{bmatrix}.
\] 
On the other hand, at infinity terms of order $\rmO(1/r^2)$ are compact, leading one to study the far field operator
\[ \mathcal{L}_f = 
\begin{bmatrix}
\Delta - 2 & 0\\
0 & \Delta
\end{bmatrix}.\]

The above heuristics can be made rigorous by choosing appropriate spatial scalings. Core and far-field approximations of  point defects can then be obtained exploiting Fredholm properties of the above operators together with the implicit function theorem, or Lyapunov-Schmidt reduction.

As a second example, consider now the complex Ginzburg Landau equation,
\[\partial_t A = ( 1+ \rmi \alpha) \Delta A + A - (1 - \rmi \beta) |A|^2 A , \quad A \in \C, \quad x \in \R^2\]
with $\alpha, \beta \in \R$. Spiral waves correspond to solutions 
\[A(r,\theta) = (1+ \rho(r)) \rme^{\rmi ( \Omega + \theta + \phi(r))},\]
with the amplitude, $\rho$, now satisfying the boundary conditions $\rho(0) = \rho(\infty) =0$, while the phase, $\phi$, satisfies $\phi(r) \sim k r$ in the far field, for some $k>0$.
In addition, one has that the frequency $\Omega = \Omega(k)$ follows a dispersion relation, implying  that both $k$ and $\Omega$ are selected by the system. 

To prove existence of these defect solutions, one can insert this ansatz into the complex Ginzburg-Landau equation, and again use the implicit function theorem to obtain functions $\rho$ and $\phi$ satisfying the prescribed boundary conditions.
This approach then leads us to study Fredholm properties of the linear map
\[ \mathcal{L} = 
\begin{bmatrix}
\Delta - \frac{1}{r^2} -2 & - \alpha \Delta \\
\beta & \Delta
\end{bmatrix}\]
where we find the elliptic operator shown in expression \eqref{e:Laplace-1/r-I} in the upper left corner.

%%%%%%%%%%%%%%%%%
In the above examples, a key assumption  is the separation between  core and far-field behavior of solutions, which allows one to treat point defects as particles/perturbations. 
This viewpoint is common in the physics and material science literature where it has been used  to study, in formal way, dislocations and vortices in liquid crystals and superconducting materials \cite{pismen1990, weinan1994}.
More recently, these ideas were used in \cite{ortner2016} to rigorously obtain a far-field description of the elastic field created by lattice defects in crystalline structures, and to then  leverage this information to run simulations \cite{Luskin2013}. 

In the context of pattern formation the above approach is less common. Indeed, most analysis of defects and coherent-structures is done using methods from spatial dynamics (see for example \cite{sandstede2004, kollar2007, sandstede2023spiral}).
Part of the issue is that model equations for pattern forming systems are usually set up on unbounded domains in order to eliminate boundary effects. As a result, when defined over standard Sobolev spaces, the elliptic operators considered here have a zero eigenvalue embedded in their essential spectrum. Thus, obtaining Fredholm properties for these maps is a non-trivial task.
Nevertheless, this method has been successfully applied to  problems arising in the study of fluid flows  on unbounded domains. For instance, in references \cite{galdi2000} and in \cite{Galdi2013},  existence of stationary solutions to the Navier-Stokes problem posed on exterior domains and past a rotating body, respectively, is obtained by deriving Fredholm properties for the relevant linear maps. These results are in turn then used to prove existence of a family of periodic solutions that bifurcate from these steady states, see \cite{galdi2016} and \cite{Galdi2023}.

%%%%%%%%%%%%%

Notice that to bypass the difficulties generated by the zero eigenvalue, selecting the correct domain for the operator is important.
The use of doubly weighted spaces in this paper is therefore not an arbitrary choice. As shown below, these spaces encode the behavior at infinity and near the origin of elements in the kernel of the operator.
These elements are responsible for the generation of Weyl sequences, which prevent the operators from having a closed range. By suitable picking the weights $\sigma, \gamma$, one can eliminate these sequences from the domain of the operator and thus recover Fredholm properties for these maps.
This same strategy was used by McOwen and Lockhart, where similar weighted spaces are used to derive Fredholm properties for the Laplacian and elliptic operators with algebraically decaying coefficients, see \cite{mcowen1979} and \cite{lockhart1985} respectively. In both works, the weighted Sobolev spaces  considered restrict the level of decay or growth of functions only at infinity, but not near the core, as is done in this paper. Similar results were also obtained for the Heat operator by Hile and Mawata in \cite{hile1998}, where again suitable weighted spaces are used in order to  obtain a closed range operator.

Finally, we remark that weighted H{\"o}lder spaces that encode the same type of singularities at the origin as the Sobolev spaces presented here have been used previously in applications.
For instance, these spaces are used in \cite{cafarelli1984, hardt1985} to prove the existence of a minimal surface in $\R^{n+1}, (n\geq 3)$ which contains an isolated singularity and that is not a cone.
In \cite{smale1989, smale1993} variations of these spaces are used to construct minimal hypersurfaces with an arbitrary number of isolated singularities (with $n \geq 7$), and in \cite{mou1989, hardt1992} these spaces are used to study harmonic functions between smooth manifolds and which admit a set of singular points.

We conclude this  introduction with two theorems summarizing the results of the paper. In Section \ref{s:preliminaries} we give some background information on the weighted Sobolev spaces we consider. Section \ref{s:helmholtz} and contain the proof of Theorem \ref{t:helmholtz} and Corollary \ref{c:Fredholm_helmholtz}, while the prove of Theorem \ref{t:Laplace} is given in Section \ref{s:laplace}.  Additional inequalities  are presented in the Appendix.

\begin{Theorem}\label{t:helmholtz}
Let $\sigma, \gamma \in \R$ with $\sigma \notin \Z$. Then, the operator
\[ \Delta - \Id: H^2_{\sigma, \gamma}(\R^2) \longrightarrow L^2_{\sigma +2, \gamma}(\R^2)\]
is Fredholm. In particular, fixing $n \in \N \cup \{0\}$ and picking
\begin{itemize}
\item $n-1 < \sigma < n$, the operator is surjective with kernel spanned by the set,
\[ M_n =\left\{ K_m(r) \cos(m \theta), \quad K_m(r) \sin (m \theta) : \quad m = 0,1,2, \cdots n \right\}.\]
\item while for $-n-2 < \sigma < -n-1$, the operator is injective with cokernel spanned by $M_n$, defined as above.
\end{itemize}
Here $K_m(r)$ denotes the modified Bessel function of the second kind and of order $m$.

\end{Theorem}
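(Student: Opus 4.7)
The plan is to reduce the PDE to a family of radial ODEs via Fourier decomposition in the angular variable. Writing $u(r,\theta) = \sum_{m\in\Z} u_m(r)\rme^{\rmi m\theta}$, the equation $(\Delta-\Id)u = f$ decouples mode by mode into $L_m u_m = f_m$ with $L_m = \partial_{rr} + r^{-1}\partial_r - m^2/r^2 - 1$. Since $b(r)$ and $\langle x\rangle$ are radial, the spaces $H^2_{\sigma,\gamma}(\R^2)$ and $L^2_{\sigma+2,\gamma}(\R^2)$ split as orthogonal direct sums of mode-$m$ subspaces, so proving the theorem amounts to proving that each $L_m$ is a bounded Fredholm operator between the natural radial weighted spaces, together with uniform-in-$m$ estimates and a collective dimension count for the kernel and cokernel.

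For each $m$ the homogeneous equation $L_m u = 0$ has the two fundamental solutions $I_m(r)$ and $K_m(r)$. Exponential growth of $I_m$ at infinity excludes it from any algebraically weighted $L^2$ space, while $K_m$ decays exponentially at infinity and satisfies $K_m(r) \sim \tfrac12(m-1)!(2/r)^m$ for $m\ge 1$, and $K_0(r) \sim -\log r$, as $r\to 0$. A direct computation shows that $\|K_m\cos(m\theta)\|_{H^2_{\sigma,\gamma}}^2$ reduces near the origin to $\int_0^1 r^{2(\sigma-m)+1}\,dr$, which converges precisely when $\sigma > m-1$. For $\sigma\in(n-1,n)$ this selects exactly the modes $m=0,1,\dots,n$ and yields the claimed spanning set $M_n$ for the kernel.

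The main analytic step is to invert $L_m$ on the orthogonal complement of its kernel via the variation-of-parameters formula
\[
u_m(r) = -K_m(r)\int_0^r I_m(s) f_m(s)\, s\,ds - I_m(r)\int_r^\infty K_m(s) f_m(s)\, s\,ds,
\]
corrected by a projection subtracting a suitable multiple of $K_m$ whenever $K_m$ is admissible. I then need to show that this integral operator maps $L^2_{\sigma+2,\gamma}$ to $H^2_{\sigma,\gamma}$ with norm independent of $m$. Splitting into the regions $r<1$, $1\le r\le 2$, and $r>2$, the interior bound reduces to weighted Hardy-type inequalities applied to kernels behaving like $(s/r)^m$ and $(r/s)^m$ (exactly the inequalities collected in the Appendix); in the exterior the exponential decay of $I_m(\min)K_m(\max)$ absorbs any algebraic weight and the estimate collapses to a standard convolution bound. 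Summing in $m$ via Parseval produces a bounded right inverse modulo the finite-dimensional kernel, giving closed range and hence surjectivity for $\sigma\in(n-1,n)$.

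For the injective case $\sigma\in(-n-2,-n-1)$ the criterion $\sigma > m-1$ fails for every $m\ge 0$, so no $K_m$ lies in $H^2_{\sigma,\gamma}$ and the operator is injective. To identify the cokernel I would use duality: under the unweighted $L^2$ pairing the formal adjoint of $\Delta-\Id$ is itself, and because $(L^2_{\sigma+2,\gamma})^* \cong L^2_{-\sigma-2,-\gamma}$, cokernel elements correspond to distributional solutions of $(\Delta-\Id)v=0$ sitting in $L^2_{-\sigma-2,-\gamma}$. With $-\sigma-2\in(n-1,n)$, repeating the Bessel-function analysis of the second paragraph identifies this cokernel with $M_n$. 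The chief obstacle throughout is the uniform-in-$m$ weighted estimate of the third paragraph; the doubly weighted structure is precisely what prevents the Weyl sequences built from $K_m$-like tails at the origin from destroying the closed-range property.
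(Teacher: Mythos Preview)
Your approach uses the same ingredients as the paper---angular Fourier decomposition, variation of parameters with $I_m,K_m$, weighted Hardy/Young estimates after the substitution $\tau=\ln r$, and duality to link the two $\sigma$-ranges---but in the \emph{opposite order}: the paper treats the injective range $\sigma<-1$ directly (Lemmas~\ref{l:surjective_helmholtz}--\ref{l:invertible_helmoltz}, Proposition~\ref{p:injective}) and then obtains surjectivity by duality plus a restriction step via Lemma~\ref{l:interpolation_2} (Proposition~\ref{p:surjective}), whereas you do surjectivity directly and plan to read off the cokernel by duality. The paper's order buys something concrete: for $\sigma<-1$ the integral $\int_0^r I_m(s)f_m(s)\,s\,ds$ appearing in your formula converges for every $f\in L^2_{\sigma+2,\gamma}$, and the cokernel condition $\int_0^\infty K_m f\,r\,dr=0$ is what allows the other limit of integration to be flipped; in your order, for modes $m\le n$ with $\sigma>m-1$, that same integral can diverge (take $f(\rho)\sim\rho^{-\alpha}$ near the origin with $m+2\le\alpha<\sigma+3$), so ``subtracting a suitable multiple of $K_m$'' is not literally available---the fix is to move the lower limit from $0$ to some fixed $a>0$, selecting a different particular solution for which the near-origin bound goes through with constant $1/(\sigma-m+1)$. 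Note also that your duality paragraph only identifies cokernel \emph{candidates}; closed range for $\sigma<-1$ still requires either direct estimates in that range or a restriction argument mirroring Proposition~\ref{p:surjective}. Finally, the Appendix holds interpolation inequalities rather than the Hardy-type bounds you invoke; those estimates are carried out inside the proofs of Lemmas~\ref{l:surjective_helmholtz}--\ref{l:invertible_helmoltz}.
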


\begin{Theorem}\label{t:Laplace}
Let $n \in \Z$ and consider $\sigma, \gamma \in \R$ such that $\sigma +1 \neq \pm \sqrt{n^2 +1}$ and 
$\gamma +1 \neq \pm \sqrt{n^2 +1}$. Then, the operator
$$ \Delta -1/r^2: M^{2,2}_{\sigma,\gamma}(\R^2) \longrightarrow L^2_{\sigma+2, \gamma+2}(\R^2)$$
is Fredholm. In addition, letting  $k, n, m \in \N \cup \{0\}$,  $q(k) = \sqrt{k^2+1}$, and defining,
\[ N_n = \left\{ r^{q(k)} \cos( k \theta),\; r^{q(k)} \sin( k \theta) \Big | \quad k = 0,1,2, \cdots n \right \},\]
\[ M_m =\left\{  r^{-q(k)} \cos( k \theta),\; r^{-q(k)} \sin( k \theta) \Big | \quad k = 0,1,2, \cdots m \right\}.\]
we have that:
\begin{enumerate}[i)]

\item If $\sigma >-2$ and $\gamma<0$, such that
\[ - q(n+1)< \gamma+1< -q(n) \quad \mbox{and} \quad q(m) < \sigma + 1< q(m+1), \]
then the operator is surjective with a finite dimensional kernel spanned by $N_n \cup M_m$.
\item If $\sigma <-2$ and $\gamma>0$, 
such that
\[ q(n) < \gamma+1< q(n+1) \quad \mbox{and} \quad -q(m+1) < \sigma+1< -q(m),\]
the operator is injective with a finite dimensional co-kernel spanned by $N_n \cup M_m$.

\item If $\sigma >-2$ and $\gamma >0$ such that
\[ q(n) < \gamma+1< q(n+1) \quad \mbox{and} \quad q(m) < \sigma +1< q(m+1),\]
then the operator has a finite dimensional kernel spanned by $M_m$ 
and a finite dimensional co-kernel spanned by $N_n$.
\item If $\sigma <-2$ and $\gamma<0$ such that
\[ -q(n+1) < \gamma+1< -q(n) \quad \mbox{and} \quad -q(m+1) < \sigma+1< -q(m),\]
then the operator has a finite dimensional kernel spanned by $N_n$
 and a finite dimensional co-kernel spanned by $M_m$.
\end{enumerate}
Finally, if $\gamma +1 = \pm \sqrt{n^2 +1}$ or $\sigma +1 = \pm \sqrt{n^2 +1}$ for some $n \in \Z$, the operator does not have a closed range.
\end{Theorem}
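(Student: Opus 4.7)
The plan is to exploit the equidimensional (Euler) structure of $\Delta - 1/r^2$ by combining a Fourier decomposition in $\theta$ with the logarithmic change of variables $s = \log r$, reducing to a family of translation-invariant ODEs on the real line whose Fredholm theory is classical.

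First, writing $u(r,\theta) = \sum_{k\in\Z} u_k(r) \rme^{\rmi k\theta}$, the operator acts diagonally with $k$-th mode
\[ L_k u_k = u_k'' + r^{-1} u_k' - (k^2+1)\, r^{-2}\, u_k. \]
Substituting $s = \log r$ and $v_k(s) = u_k(\rme^s)$ converts this to the constant-coefficient ODE $r^2 L_k u_k = v_k'' - (k^2+1)\, v_k$, whose characteristic roots $\pm q(k) = \pm\sqrt{k^2+1}$ are precisely the critical weights in the statement. I would verify that the Fourier decomposition of $M^{2,2}_{\sigma,\gamma}(\R^2)$ produces, mode by mode, a weighted Sobolev space on $\R$ for $v_k$ with exponential weights $\rme^{(\sigma+1)s}$ on $s<0$ and $\rme^{(\gamma+1)s}$ on $s>0$ (the shift $+1$ arising from $r\,dr\,d\theta = \rme^{2s}\,ds\,d\theta$), and that the codomain $L^2_{\sigma+2,\gamma+2}$, after absorbing the $r^{-2}$ in front of $L_k$, acquires exactly the same weights.

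Next, I would establish Fredholm theory for each mode separately. The constant-coefficient operator $\partial_{ss} - (k^2+1)$ between these exponentially weighted spaces on $\R$ is Fredholm precisely when the weights do not hit the characteristic roots, i.e.\ when $\sigma + 1 \neq \pm q(k)$ and $\gamma + 1 \neq \pm q(k)$, as in the McOwen--Lockhart-type arguments cited in the introduction; an equivalent proof proceeds by direct variation of parameters against the fundamental solutions $\rme^{\pm q(k) s}$. The kernel and cokernel of the $k$-th mode consist of those homogeneous solutions $r^{\pm q(k)}$ that happen to lie in the admissible weighted space or its dual. Checking which of $\pm q(k)$ meet the weight inequalities both at $s=-\infty$ (governed by $\sigma$) and at $s=+\infty$ (governed by $\gamma$) in each of the four sign regimes (i)--(iv) is exactly what produces the sets $N_n$ and $M_m$.

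Third, I would assemble the modes into a global statement. For $|k|$ large, the zeroth-order term $(k^2+1)/r^2$ dominates and yields coercive estimates that are uniform in $k$, so only finitely many modes can contribute to the kernel and cokernel. Summing the per-mode Parseval identities gives the global estimate and closedness of the range under the non-resonance hypothesis on $\sigma, \gamma$. For the critical case $\sigma+1 = \pm q(n)$ (respectively $\gamma+1 = \pm q(n)$), I would build a Weyl sequence from $r^{\pm q(n)} \rme^{\rmi n\theta}$ multiplied by a slowly varying logarithmic cutoff supported near the origin (respectively near infinity): the cutoff places the function just inside the domain while driving the image norm to zero, with no $L^2$-convergent subsequence, hence the range cannot be closed.

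The main obstacle I anticipate is the bookkeeping in the second step, namely matching, regime by regime, the admissibility conditions on $r^{\pm q(k)}$ to the precise index ranges $k = 0,\dots,n$ and $k = 0,\dots,m$ defining $N_n$ and $M_m$, and correctly identifying the cokernel via duality, since the dual of $M^{2,2}_{\sigma,\gamma}$ involves shifted weights that must be translated back into statements about $r^{\pm q(k)}$ in the original scale. A secondary technical difficulty is justifying the uniform-in-$k$ elliptic estimate that lets the per-mode bounds reassemble into a genuine $M^{2,2}_{\sigma,\gamma}$-norm control, since the weights mix origin and far-field behavior and each $|\alpha|$-derivative carries its own shifted weight in the definition of the norm.
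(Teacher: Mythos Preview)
Your proposal is correct and follows essentially the same route as the paper: Fourier decomposition in $\theta$, the logarithmic substitution $\tau=\log r$ reducing each mode to a constant-coefficient ODE with characteristic roots $\pm q(k)$, variation of constants with Young's inequality for the estimates, an interpolation step to recover the full $M^{2,2}$ norm from $M^{1,2}$ control plus the equation, and a Weyl-sequence argument at the critical weights. The only tactical difference is that you suggest identifying the cokernel by duality, whereas the paper handles the injective cases directly by showing the range equals the annihilator of $N_n\cup M_m$ (using the orthogonality condition $\langle f, r^{\pm q(k)}\rme^{\rmi k\theta}\rangle=0$ to flip the limits in the variation-of-constants integral); both arguments work and are equivalent in content.
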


%%%%%%%%%%%%%%%%%%%%%%%%%%%%%%%%%%%%%%%%%%%%%%
%%%%%%%%%PRELIMINARIES
%%%%%%%%%%%%%%%%%%%%%%%%%%%%%%%%%%%%%%%%%%%%%%

\section{Preliminaries}\label{s:preliminaries}

In this section we recall the definition of the Sobolev spaces we use in this paper and state some of their relevant properties.

We define $M^{s,p}_{\sigma, \gamma}(\R^2, \C)$ and $H^{s}_{\sigma, \gamma}(\R^2, \C)$
 as the completion of $C^\infty_0(\R^2,\C)$ functions under the respective norms
\[ \|u\|_{M^{s,p}_{\sigma, \gamma}} = \sum_{|\alpha|\leq s} \| D^\alpha u \; b(|x|)^{\sigma +|\alpha|} \langle x \rangle^{\gamma +|\alpha|} \|_{L^p}, \]
\[ \|u\|_{H^{s}_{\sigma, \gamma}} = \sum_{|\alpha|\leq s} \| D^\alpha u \; b(|x|)^{\sigma +|\alpha|} \langle x \rangle^{\gamma } \|_{L^2}, \]
where $\langle x \rangle = (1 +|x|^2)^{1/2}$ and $b(r) \in C^\infty(\R_+)$ with,
\[b(r) = \left \{ \begin{array}{c c c}
1 & \mbox{for} & r>2\\
r & \mbox{for} & r<1.\\
\end{array} \right. \]
Notice that equivalent norms for these space are
\[ \|u\|_{M^{s,p}_{\sigma, \gamma}} = \sum_{|\alpha|\leq s} \| D^\alpha u \; b(|x|)^{\sigma +|\alpha|} \|_{L^p(B_1)} + \| D^\alpha u \;\langle x \rangle^{\gamma +|\alpha|} \|_{L^p(\R^2 \setminus B_1)}, \]
and
\[ \|u\|_{H^{s}_{\sigma, \gamma}} = \sum_{|\alpha|\leq s} \| D^\alpha u \; b(|x|)^{\sigma +|\alpha|} \|_{L^2(B_1)} + \| D^\alpha u \;\langle x \rangle^{\gamma } \|_{L^2(\R^2 \setminus B_1)} \]
where $B_1$ denotes the unit ball in $\R^2$.

Extending the theory presented in \cite{stein1971} one can show that equivalent formulations for these doubly weighted spaces are,
\begin{equation}\label{e:directsum1}
 M^{s,p}_{\sigma, \gamma}(\R^2,\R) = \oplus_n m^{s,p}_{n, \sigma, \gamma}, \quad n \in \Z, 
 \end{equation}
 and
 \begin{equation}\label{e:directsum2}
 H^{s}_{\sigma, \gamma}(\R^2,\R) = \oplus_n h^{s,p}_{n, \sigma, \gamma}, \quad n \in \Z. 
 \end{equation}
In the first case,
\[ m^{s,p}_{n, \sigma, \gamma}  = \{ u \in M^{s,p}_{\sigma, \gamma}(\R^2,\C) : u = u_n(r) \rme^{\rmi n \theta}, \; u_n(r) \in M^{s,p}_{r,\sigma, \gamma}(\R^2,\R)\}.\]
while 
\[ h^{s,p}_{n, \sigma, \gamma}  = \{ u \in H^{s}_{\sigma, \gamma}(\R^2,\C) : u = u_n(r) \rme^{\rmi n \theta}, \; u_n(r) \in H^{s}_{r,\sigma, \gamma}(\R^2,\R)\}.\]
In the above definitions we use the notation $M^{s,p}_{r,\sigma, \gamma}(\R^2,\R)$ and $H^{s}_{r,\sigma, \gamma}(\R^2,\R)$  to describe those functions in $M^{s,p}_{\sigma, \gamma}(\R^2,\R)$ and $H^{s}_{\sigma, \gamma}(\R^2,\R)$, respectively, which are radially symmetric.
For a proof of \eqref{e:directsum1} see \cite{jaramillo2022}. The second formulation \eqref{e:directsum2} follows by similar arguments.

%%%%%%%%%%%%%%%%%%%%%%%%%%%%%%%%%%%%%
%%%%%%%%%%%%%LAPLACE - I %%%%%%%%%%%%%%%%%%
%%%%%%%%%%%%%%%%%%%%%%%%%%%%%%%%%%%%%

\section{Fredholm Properties for $\Delta- \Id$}\label{s:helmholtz}
We first prove Freholm properties for the operator 
\[ \Delta -\Id: H^2_{\sigma,\gamma}(\R^2) \longrightarrow L^2_{\sigma+2, \gamma+2}(\R^2).\]
We split the result into two cases depending on the value of $\sigma $ .
We first show that for $\sigma <-1$ and $\gamma \in \R$, the operator is injective with a finite dimensional cokernel. This result is summarized in Proposition \ref{p:injective}.
We then use the fact that the operator is self-adjoint to prove in Proposition \ref{p:surjective} that for $\sigma >-1$ and $ \gamma \in \R$, the operator is surjective with finite dimensional kernel.
As a consequence we obtain Theorem \ref{t:helmholtz} stated in the introduction.

To analyze the operator we use the decomposition of $H^2_{\sigma, \gamma}(\R^2)$ into the direct sum $\oplus h^2_{n,\sigma, \gamma}$ and write
\begin{align*}
(\Delta - \Id) u = & f\\
\sum_n ( \Delta_n - \Id)u_n(r)\; \rme^{\rmi n \theta} =& \sum_n f_n(r) \;\rme^{\rmi n \theta} 
\end{align*}
where $\Delta_n = \partial_{rr} + \frac{1}{r} \partial_r - \frac{n^2}{r^2}$.
This view point allows to directly extend the results of Theorem \ref{t:helmholtz} to analyze the related equation
\begin{align*}
(\Delta - \frac{1}{r^2} - \Id) u = & f\\
\sum_n ( \Delta_n - \frac{1}{r^2} -\Id)u_n(r)\; \rme^{\rmi n \theta} =& \sum_n f_n(r) \;\rme^{\rmi n \theta} 
\end{align*}
and thus obtain the following corollary to Theorem 1.

\begin{Corollary}\label{c:Fredholm_helmholtz}
Let $\sigma, \gamma \in \R$ with $\sigma \notin \{ \sqrt{n^2+1} \mid n \in \N \cup \{0\} \} $. Then, the operator
\[ \Delta - \frac{1}{r^2} - \Id: H^2_{\sigma, \gamma}(\R^2) \longrightarrow L^2_{\sigma +2, \gamma}(\R^2)\]
is Fredholm. In particular, fixing $n \in \N \cup \{0\}$ and picking
\begin{itemize}
\item $\sqrt{n^2+1} -1 < \sigma < \sqrt{n^2+1} $, the operator is surjective with kernel spanned by the set,
\[ M_n =\left\{ K_{q(m)}(r) \cos(m \theta), \quad K_{q(m)}(r) \sin (m \theta) \Big | \; q(m) = \sqrt{m^2+1},  \quad m = 0,1,2, \cdots n \right\}.\]
\item while for $- \sqrt{n^2+1} -2 < \sigma < - \sqrt{n^2+1} -1$, the operator is injective with cokernel spanned by $M_n$, defined as above.
\end{itemize}
Here $K_{q(m)}(r)$ denotes the modified Bessel function of the second kind and of order $q(m)$.

\end{Corollary}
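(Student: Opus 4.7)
The approach is to reduce the corollary to Theorem \ref{t:helmholtz} by decomposing in Fourier modes and absorbing the extra $1/r^2$ potential into a shift of the angular index. Using the direct sum decomposition \eqref{e:directsum2} and writing $u = \sum_n u_n(r) \rme^{\rmi n \theta}$, $f = \sum_n f_n(r) \rme^{\rmi n \theta}$, the equation $(\Delta - 1/r^2 - \Id)u = f$ decouples into
\[ \left( \partial_{rr} + \tfrac{1}{r} \partial_r - \tfrac{n^2 + 1}{r^2} - 1 \right) u_n = f_n, \qquad n \in \Z. \]
Setting $q(n) = \sqrt{n^2+1}$, the left-hand side is formally the radial part of $\Delta - \Id$ acting on a Fourier mode of (non-integer) order $q(n)$. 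Thus the plan is to run the proof of Theorem \ref{t:helmholtz} verbatim, replacing the integer order $n$ by the real order $q(n)$ on each Fourier component.

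The first step is to inspect the proof of Theorem \ref{t:helmholtz} and confirm that, on each mode, it uses nothing beyond the ODE theory of the modified Bessel equation with a real order parameter. A right inverse is built by variation of parameters from the fundamental pair $I_\nu(r), K_\nu(r)$, which satisfy $I_\nu(r) \sim r^\nu$ and $K_\nu(r) \sim r^{-\nu}$ as $r \to 0$, with exponential growth and decay respectively at infinity, for any real $\nu > 0$. Since $q(n) \geq 1$ for every integer $n$, the logarithmic case $\nu = 0$ never arises. Green's function formulas, weighted integrability thresholds, and the mode-wise Fredholm estimates therefore transfer to the present setting under $n \mapsto q(n)$, and this is the main content of the argument.

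Kernel and cokernel are then identified by tracking which modes survive the weight constraints. Exponential decay of $K_{q(n)}$ at infinity makes membership in $h^2_{n,\sigma,\gamma}$ a purely local condition near the origin: the integral $\int_0^1 r^{-2q(n) + 2\sigma + 1}\, dr$ is finite if and only if $\sigma > q(n) - 1$. Under the hypothesis $\sqrt{n^2+1} - 1 < \sigma < \sqrt{n^2+1}$, this selects precisely the modes $m = 0, 1, \ldots, n$, yielding the spanning set $M_n$ claimed for the kernel. The injective case $-\sqrt{n^2+1} - 2 < \sigma < -\sqrt{n^2+1} - 1$ follows by the self-adjoint duality argument bridging Propositions \ref{p:injective} and \ref{p:surjective}, identifying the cokernel with the kernel of the formally self-adjoint operator acting on the dual weighted space. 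The main (and essentially the only) obstacle is to verify that integrality of the angular index is never invoked in the proof of Theorem \ref{t:helmholtz}, which is immediate from the continuity of the Bessel function data in the order parameter.
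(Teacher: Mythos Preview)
Your proposal is correct and follows exactly the approach the paper indicates: the paper does not give a separate proof of the corollary but simply notes (in the paragraph immediately preceding the statement) that the Fourier decomposition turns $(\Delta - 1/r^2 - \Id)$ on the $n$-th mode into the modified Bessel operator of order $q(n)=\sqrt{n^2+1}$, so that the analysis of Theorem~\ref{t:helmholtz} carries over with $n$ replaced by $q(n)$. Your observation that $q(n)\geq 1$ for all $n$, so that the logarithmic case $\nu=0$ of Lemma~\ref{l:surjective_helmholtz} never arises, is a nice point the paper leaves implicit.
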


We consider first the case when $\sigma <-1$.
In Lemma \ref{l:surjective_helmholtz} we first show that the operators $(\Delta_n -\Id): H^2_{r,\sigma, \gamma}(\R^2) \longrightarrow L^2_{r,\sigma, \gamma}(\R^2) $ are injective and have a closed range, provided $\sigma <-n-1$.
Next, in Lemma \ref{l:invertible_helmoltz} we show that for $ -n-1 < \sigma < n-1$ these operators are injective.

\begin{Lemma}\label{l:surjective_helmholtz}
Fix $n \in \N \cup \{0\}$ and take $\sigma<-n-1$.
Then, the operator
\[ \Delta_n - \Id: H^2_{r,\sigma, \gamma}(\R^2) \longrightarrow L^2_{r,\sigma +2, \gamma}(\R^2)\]
is injective, with closed range, and finite dimensional cokernel spanned by $K_n(r)$,
 the modified Bessel function of the second kind and of order $n$.
\end{Lemma}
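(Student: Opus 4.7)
My plan is to prove injectivity by directly analyzing the homogeneous ODE, to construct an explicit right inverse on a codimension-one subspace of the target using variation of parameters, and to identify the cokernel via self-adjoint duality. For injectivity, observe that any $u\in H^2_{r,\sigma,\gamma}$ in the kernel is a radial classical solution of $u''+u'/r-(n^2/r^2+1)u=0$, so $u = a I_n(r) + b K_n(r)$. The exponential growth $I_n(r)\sim \rme^r/\sqrt{2\pi r}$ is not tamed by any polynomial weight $\langle r\rangle^\gamma$, forcing $a=0$. Near the origin $K_n(r)$ behaves like $r^{-n}$ for $n\geq 1$ and like $-\log r$ for $n=0$; the weight $b(r)^\sigma$ with $\sigma<-n-1$ makes both behaviors incompatible with $L^2$, forcing $b=0$.

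For the right inverse, the Wronskian $W(I_n, K_n) = -1/r$ produces the particular solution
\[
Tf(r) \;=\; -I_n(r)\int_r^\infty s K_n(s) f(s)\,ds \;-\; K_n(r)\int_0^r s I_n(s) f(s)\,ds,
\]
which formally satisfies $(\Delta_n - \Id)Tf = f$. Since $\sigma < -n-1$ places $K_n$ in the dual space $L^2_{r,-(\sigma+2),-\gamma}$, the functional $\mathcal{I}(f) := \int_0^\infty s K_n(s) f(s)\, ds$ is a bounded linear form on $L^2_{r,\sigma+2,\gamma}$. On $\ker\mathcal{I}$, the identity $-\int_r^\infty = \int_0^r - \int_0^\infty$ rewrites $Tf$ as
\[
Tf(r) \;=\; I_n(r)\int_0^r s K_n(s) f(s)\,ds \;-\; K_n(r)\int_0^r s I_n(s) f(s)\,ds,
\]
eliminating the non-decaying contribution $-I_n(r)\mathcal{I}(f) \sim r^n$ and replacing it by $\rmO(r^{2+\beta})$ when $f(r) = \rmO(r^\beta)$, exactly what is needed for $Tf$ to lie in $H^2_{r,\sigma,\gamma}$ under $\sigma<-n-1$.

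Boundedness of $T:\ker\mathcal{I}\to H^2_{r,\sigma,\gamma}$ is then verified by estimating the weighted $L^2$ norms of $Tf$, $\partial_r Tf$, $\partial_{rr} Tf$ separately on $\{r\le 1\}$ and $\{r\ge 1\}$. Near the origin the asymptotics of $I_n, K_n$ together with the cancellation above reduce each contribution to an integral of the type $\int_0^r s^k f(s)\,ds$, controlled in the $b(r)^{\sigma+|\alpha|}$ norm by weighted Hardy inequalities collected in the Appendix. At infinity, the exponential decay of $K_n$ combined with the identity $I_n(r)K_n(r)\sim 1/(2r)$ reduces the estimate to a convolution against an exponentially localized kernel, for which the polynomial weight $\langle r\rangle^\gamma$ passes through. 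Boundedness of $T$ yields an a priori estimate on $\ker\mathcal{I}$ and hence closed range equal to $\ker\mathcal{I}$. To identify the cokernel I would pair $(\Delta_n-\Id)u$ with $K_n$ in the $L^2(r\,dr)$ inner product, integrate by parts twice, verify that the boundary terms at $0$ and $\infty$ vanish thanks to the weighted decay of $u$ and the exponential decay of $K_n$, and use $(\Delta_n-\Id)K_n=0$ to conclude that $\mathcal{I}((\Delta_n-\Id)u)=0$ for every $u\in H^2_{r,\sigma,\gamma}$; hence the range equals $\ker\mathcal{I}$ and the cokernel is spanned by $K_n$.

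The hardest step will be the weighted boundedness of $T$: the second derivatives of $Tf$ produce principal-value-type contributions whose control in the $b(r)^{\sigma+2}\langle r\rangle^\gamma$ weighted $L^2$ norm hinges on the cancellation $\mathcal{I}(f)=0$ to remove the spurious $r^n$ behavior at the origin, and on the Hardy-type inequalities from the Appendix to absorb the remaining weighted singular integrals.
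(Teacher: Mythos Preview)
Your proposal is correct and follows essentially the same route as the paper: the variation-of-parameters inverse built from $I_n,K_n$, the codimension-one subspace $\ker\mathcal{I}$, the cancellation $\int_r^\infty=-\int_0^r$ on $\ker\mathcal{I}$ to remove the $r^n$ growth of the $I_n$ contribution near the origin, and exponential-kernel Young estimates at infinity. Two minor deviations: the paper upgrades from an $H^1$ bound to $H^2$ via the interpolation Lemma~\ref{l:interpolation_2} rather than estimating $\partial_{rr}Tf$ directly, and the Appendix contains no Hardy inequalities --- the weighted integral bounds you refer to are proved inline in the lemma via the substitution $\tau=\ln r$, which converts them into exponential convolutions handled by Young's inequality.
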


\begin{proof}
Fix $n \in \N \cup \{0\}$, pick $\sigma < -(n+1)$ and let
\[ R = \left \{ f \in  L^2_{r,\sigma+2,\gamma}(\R^2) \Big | \int_0^\infty f(r) K_n(r) \; rdr  =0 \right \}.\]
Notice that the set $R$ is closed. Indeed, this follows since for $\sigma < -( n+1)$ 
the function $K_n(r)$ defines a bounded linear functional on
$L^2_{r,\sigma+2,\gamma}(\R^2)$. The set $R$ is the nullspace of this functional and is therefore closed.

To show that the operator given in the proposition has closed range and finite dimensional cokernel,  we prove that the map
\[ \begin{array}{c c c}
R & \longrightarrow & H^2_{r,\sigma,\gamma}(\R^2)\\
f & \longmapsto & G \ast f
\end{array}
\]
given by
\begin{equation}\label{e:inverseu}
G \ast f = I_n(r) \int_r^\infty K_n(\rho) f(\rho) \rho \;d\rho +
  K_n(r) \int_0^r I_n(\rho) f(\rho) \rho \;d\rho.
  \end{equation}
gives a representation for the inverse $(\Delta_n -\Id)^{-1}$ and is a bounded operator. 

The first statement follows from realizing that the equation
\[ \partial_{rr} u + \frac{1}{r} \partial_r u - \frac{n^2}{r^2} u - u = f \]
can be solved using the Green's function
\[ G(r, \rho) = \left \{ 
\begin{array}{c c c}
\dfrac{I_n(r) K_n(\rho)}{W(\rho)} & \mbox{for} & 0< r< \rho,\\[2ex]
\dfrac{K_n(r) I_n(\rho)}{W(\rho)} & \mbox{for} & \rho< r< \infty,
\end{array}
\right.\]
where the Wronskian $W(\rho) =   I'_n(\rho) K_n(\rho)- I_n(\rho) K'_n(\rho) = 1/\rho$,
giving us expression \eqref{e:inverseu}.

To prove that the operator $G \ast $ is a bounded map with domain $H^2_{r,\sigma,\gamma}(\R^2)$,
we let $u = G \ast f$ and in the following paragraphs show that 
\begin{equation}\label{e:ineq_h1}
\| u\|_{H^1_{r,\sigma,\gamma}(\R^2)} \leq \|f\|_{L^2_{r,\sigma+2, \gamma}(\R^2)}.
\end{equation}
Then, to conlcude that $u$ is in $H^2_{r,\sigma,\gamma}(\R^2)$, we
let $v(r,\theta) = u(r) \rme^{\rmi n \theta}$ and observe that
\[ (\Delta -\Id)u(r) \rme^{\rmi n \theta} = [(\Delta_n -\Id) u(r)] \rme^{\rmi n \theta} = f(r)  \rme^{\rmi n \theta}.\]
The following inequality
\[ \|v\|_{H^2_{\sigma,\gamma}(\R^2)} \leq C(\| (\Delta -\Id)v\|_{L^2_{\sigma+2,\gamma}(\R^2)} + \| v\|_{L^2_{\sigma,\gamma}(\R^2} )\]
stated in Lemma \ref{l:interpolation_2} in the Appendix,  gives us the desired result.

We split the proof of inequality \eqref{e:ineq_h1} into three main steps.

\begin{table}[t]
\begin{center}
\begin{tabular}{ m{2cm} m{5cm} m{4.5cm}  } 
\specialrule{.1em}{.05em}{.05em} 
  & $z \to 0$ & $z \to \infty $\\
  \hline
  \vspace{1ex}
$ K_\nu(z) $ &$ \sim \frac{1}{2}\Gamma(\nu)\left[\frac{z}{2} \right]^{-\nu}$ &$ \sim \sqrt{ \frac{\pi}{2 z} } \rme^{-z} $\\ 
$ I_\nu (z) $ & $ \sim \frac{1}{\Gamma(\nu+1)} \left[ \frac{z}{2} \right]^\nu $ &$\sim \sqrt{ \frac{1}{2 z \pi} } \rme^{z}  $\\      
\specialrule{.1em}{.05em}{.05em}
\end{tabular}
\end{center}
\caption{ 
Asymptotic behavior for the first-order Modified Bessel functions of the first and second kind, and their derivatives, taken from \cite[(9.6.8), (9.6.9), (9.7.2)]{abramowitz}. Here, the integer $\nu\geq1$ and $\Gamma(\nu)$ is the Euler-Gamma function. }
\label{t:bessel2}
\end{table}

{\bf Step 1:} To  bound $u$ in $L^2_{r,\sigma,\gamma}(\R^2)$, we show that
\begin{align}\label{e:ineq1} 
\| u\|_{L^2_{r,\sigma,\gamma}(\R^2 \setminus B_1)}  \leq \| f\|_{L^2_{r,\sigma+2, \gamma}(\R^2) } \\
\label{e:ineq2}
\| u\|_{L^2_{r,\sigma,\gamma}(B_1)} \leq \| f\|_{L^2_{r,\sigma+2, \gamma}(\R^2) }. 
\end{align}

{\bf Step 1.1:} To prove inequality \eqref{e:ineq1} we write $ \| u\|_{L^2_{r,\sigma,\gamma}(\R^2 \setminus B_1)} < A +B$ where
\begin{align*}
A^2 = & \int_1^\infty \left[I_n(r)  \int_r^\infty K_n(\rho) f(\rho) \; \rho d\rho \right]^2 \langle r \rangle^{2 \gamma} \;r dr,\\
B^2 = & \int_1^\infty \left[K_n(r)  \int_0^r I_n(\rho) f(\rho) \; \rho d\rho \right]^2 \langle r \rangle^{2 \gamma} \;r dr,
\end{align*}
and show that $A,B \leq \|f \|_{L^2_{r,\sigma+2, \gamma}(\R^2\setminus B_1)}$.

Because we are in the setting where $1<r< \rho$ we can use the asymptotic forms of the functions $I_n(r)$ and $K_n(r)$ (see Table \ref{t:bessel2}) to write
\[
A^2 \leq C  \int_1^\infty \left[ \int_r^\infty  \frac{\rme^{- (\rho-r)}}{\sqrt{r} \sqrt{\rho}} f(\rho); \rho d\rho \right]^2 \langle r \rangle^{2 \gamma} \;r dr,
\]
while for the term $B^2$ we have that
\[B^2 \leq  \int_1^\infty \left[ 
 \int_0^r \frac{\rme^{-(r-\rho)} }{ \sqrt{r} \sqrt{\rho}} f(\rho) \; \rho d\rho
 \right]^2 \langle r \rangle^{2 \gamma} \;r dr,\]
since $r>1$ and $0< \rho <r$.
An application of Young's inequality then gives us the desired results:
\begin{align*}
A & \leq C(\gamma) \| f\|_{L^2_{r,\sigma+2,\gamma}(\R^2\setminus B_1)},\\
B & \leq C(\gamma) \| f\|_{L^2_{r,\sigma+2,\gamma}(\R^2\setminus B_1)}.
\end{align*}

{\bf Step 1.2:} To prove inequality \eqref{e:ineq2} we follow a similar strategy and write $\| u\|_{L^2_{r,\sigma,\gamma}(B_1)} \leq E + D$ where
\begin{align*}
E^2 = & \int_0^1\left[I_n(r)  \int_r^\infty K_n(\rho) f(\rho) \; \rho d\rho \right]^2 r^{2 \sigma} \;r dr\\
D^2 = & \int_0^1 \left[K_n(r)  \int_0^r I_n(\rho) f(\rho) \; \rho d\rho \right]^2 r^{2 \sigma} \;r dr
\end{align*}
and show that $E,D \leq \|f \|_{L^2_{r,\sigma+2, \gamma}(B_1)}$.\\
 We treat the case of $n\geq 1$ first, and show the result for $n=0$ in Step 1.3.

We start with the term $E$. Because $0<r<1$ we may use the asymptotic approximation for $I_n(r)$ near the origin,
 and because $f \in R$, we may write
\[E^2 \leq  \int_0^1\left[r^n  \int_0^r - K_n(\rho) f(\rho) \; \rho d\rho \right]^2 r^{2 \sigma} \;r dr.\]
Notice that letting $g(\rho) = - K_n(\rho) f(\rho) \rho $ and $v (r) = \int_0^r g(\rho) \; d\rho,$ we have that $v$ solves 
\begin{equation}\label{e:v_eq}
 \partial_r v = g(r) \qquad v(0) =0.
\end{equation}
The above inequality can then be written as
\[ E^2 \leq \| r^n v(r) \|^2_{L^2_{r,\sigma,\gamma}(B_1)} =\|  v(r) \|^2_{L^2_{r,\sigma+n ,\gamma}(B_1)}  \]
and our goal now is to show that
$$\|  v(r) \|^2_{L^2_{r,\sigma+n ,\gamma}(B_1)}  \leq \| f\|^2_{L^2_{r,\sigma+2, \gamma}(\R^2)}.$$

Using the change of variables
\begin{equation}\label{e:change_coor}
\tau = \ln r, \qquad w = v( \rme^{\tau}) \rme^{-\beta \tau}, \qquad h(\tau) = g(\rme^{\tau}) \rme^{-\beta \tau }\rme^{\tau},  
\end{equation}
with $\beta =  -[\sigma+n +1]$ we see that equation \eqref{e:v_eq} is now given by
\begin{equation}\label{e:w_eq}
 \partial_\tau ( w \rme^{\beta \tau}) = h(\tau) \rme^{\beta \tau}, \qquad \lim_{\tau \to -\infty} w(\tau)e^{\beta \tau} =0.
 \end{equation}
 This ode has solutions
 \begin{equation}\label{e:2_sol}
 w(\tau) = \int_{-\infty}^\tau \rme^{\beta(s-\tau)} h(s) \;ds
 \end{equation}
 and one can show, again using Young's inequality, that
  \begin{equation}\label{e:ineq_w}
  \| w \|_{L^2(-\infty,0)} \leq C(\beta) \| h\|_{L^2(-\infty,0)},
  \end{equation}
  provided $\beta  = - [\sigma +n +1]>0$.
 
 We now relate this last result to the original variables $v$, $g$, and $f$.
 First, a short calculation shows that with $\beta=-  [\sigma +n +1]$,  $v$ and $w$ satisfy
\begin{equation}\label{e:norm_w}
 \|w\|_{L^2(-\infty,0)} = \| v\|_{L^2_{r,\sigma+n, \gamma}(B_1)}
\end{equation}
Next, we look the $L^2$ norm of $h$ and find
\[
 \|h \|_{L^2(-\infty,0)} = \int_0^1 |g|^2 r^{-2 \beta} r^2 \frac{1}{r} \;dr = \int_0^1 |K_n(r) f(r) r|^2 r^{-2\beta} r \;dr
\]
 Since $0<r<1$, we may approximate $K_n$ with its asymptotic expansion near the origin. For $n \geq 1$ this leads to
\begin{equation}\label{e:norm_h_n}
  \|h \|_{L^2(-\infty,0)} \leq C \int_0^1 |f(r)|^2 r^{-2\beta -2n +2} \; r dr = \| f\|_{L^2_{r,\sigma+2,\gamma}(B_1)}
  \end{equation}

By combining the expressions \eqref{e:norm_w} and \eqref{e:norm_h_n} with the inequality \eqref{e:ineq_w} we obtain
the desired result, 
$$E^2 \leq \|v\|_{L^2_{r, \sigma+n,\gamma}(B_1)} \leq \| f\|^2_{L^2_{r,\sigma+2,\gamma}(B_1)}.$$

Next, we show that $D^2 \leq  \| f\|^2_{L^2_{r,\sigma+2,\gamma}(B_1)}$ using a similar approach.
The asymptotic approximation for the function $K_n(r)$ near the origin gives
\[D^2 \leq c \int_0^1 \left[ r^{-n} \int_0^r I_n(\rho) f(\rho) \rho \;d \rho \right]^2 r^{2 \sigma} \; rdr\]
Letting $g(\rho) = I_n(\rho) f(\rho) \rho $ and $v = \int_0^r g(\rho) \; \rho$, the integral on the right is equivalent to
\[ \|r^{-n} v \|^2_{L^2_{r, \sigma,\gamma}(B_1)} = \| v \|^2_{L^2_{r, \sigma-n,\gamma}(B_1)}.\]
To show $\|  v \|^2_{L^2_{r, \sigma-n,\gamma}(B_1)} \leq \| f\|^2_{L^2_{\sigma+2,\gamma}(B_1)}$ we again use the change of coordinates \eqref{e:change_coor}, but now with $\beta = - [\sigma -n +1]$, leading to equation \eqref{e:w_eq}. As before, this ode has solutions of the form \eqref{e:2_sol} and we find that
\begin{equation}\label{e:boundw1}
\| w \|_{L^2(-\infty,0)} \leq c(\beta) \| h\|_{L^2(-\infty,0)},
  \end{equation}
  provided $\beta  = - [\sigma -n +1]>0$. Since by assumption $\sigma<-n-1< n-1$, this condition holds.
  
Notice that with this choice of $\beta$,
\[ \| w\|_{L^2(-\infty,0)}= \| v\|_{L^2_{r,\sigma-n, \gamma}(B_1)}.\]
At the same time, for $n \geq 1$ we may use the asymptotic form of $I_n(r)$ near the origin to obtain
\[ \| h\|_{L^2(-\infty,0)} \leq c \| f\|^2_{L^2_{\sigma+2,\gamma}(B_1)}\]
and the result for $D^2$ then follows from inequality \eqref{e:boundw1}.

{\bf Step 1.3:} We now treat the case $n =0$ and start  with the term $D$. In this case,
\[D^2 \leq c \int_0^1 \left[ \log r \int_0^r  f(\rho) \rho \;d \rho \right]^2 r^{2 \sigma} \; rdr\]
since $K_0(r) \sim \rmO(\log(r))$ and $I_0(r) \sim \rmO(1)$ for $r$ near the origin.
The above inequality can then be written as
\[D^2 \leq  \| \log r \; v \|_{L^2_{r,\sigma,\gamma}(B_1)}\]
where $v = \int_0^r g(\rho) \; d\rho$ and $g(\rho) = f(\rho) \rho$.
Using the alternative change of coordinates
\[ \tau = \log r, \quad \tau w(\tau) = v(\rme^\tau) \rme^{-\beta \tau}, \quad h(\tau) = g(\rme^{\tau}) \rme^{-\beta \tau} \rme^{\tau} \]
we find that $w$ satisfies the o.d.e.
$\partial_\tau ( \tau w(\tau) \rme^{\beta \tau}) = h(\tau) \rme^{\beta \tau}$.
This equation has solutions 
\[ \tau w(\tau) = \int_{-\infty}^ \tau h(s) \rme^{\beta(s-\tau)}\;ds\]
so that for $\beta  = - [\sigma +1]>0$, Young's inequality gives us
\[ \| \tau w(\tau) \|_{L^2(-\infty,0)} \leq \frac{1}{\beta} \| h(\tau) \|_{L^2(-\infty, 0)}\]
Notice also that for this choice of $\beta$, 
\[ \| \log r \; v(r) \|_{L^2_{r,\sigma,\gamma}(B_1)} = \| \tau w(\tau) \|_{L^2(-\infty,0)} \]
\[ \| h(\tau) \|_{L^2(-\infty, 0)} \leq  \| f\|_{L^2_{r,\sigma+2, \gamma}(B_1)}\]
Giving us the desired inequality,
\[ D^2 \leq  \|\log r \; v\|_{L^2_{r,\sigma, \gamma}(B_1)} = \| \tau \; w\|_{L^2(-\infty,0)}\leq  \frac{1}{\beta}  \| h\|_{L^2(-\infty,0)} \leq \frac{1}{\beta}  \| f\|^2_{L^2_{\sigma+2,\gamma}(B_1)} \]

To bound the term  $E$ when $n =0$, 
noticing first that since $K_0$ is a decreasing function,  for values of  $r< \rho$ we have
\[ K_0(\rho) = K_0(r) \frac{K_0(\rho)}{K_0(r)} < K_0(r).\]
Therefore,
\begin{align*}
E^2 \leq &  \int_0^1\left[ \int_r^\infty K_0(\rho) f(\rho) \; \rho d\rho \right]^2 r^{2 \sigma} \;r dr\\
\leq  &  \int_0^1\left[ K_0(r) \int_r^\infty  f(\rho) \; \rho d\rho \right]^2 r^{2 \sigma} \;r dr\\
\leq  &  \int_0^1\left[ \log r \int_r^\infty  f(\rho) \; \rho d\rho \right]^2 r^{2 \sigma} \;r dr\\
\leq & D^2,
\end{align*}
where the first line follows from approximating  $I_0(r) \sim \rmO(1)$ for $r$ near the origin, while the second to  last line follows from $K_0(r) \sim -\log r $ for $r\sim 0$.

{\bf Step 2:} To bound the derivative $\partial_r u$ in $L^2_{r,\sigma+1,\gamma}(\R^2)$ we first use expression 
\eqref{e:inverseu} to obtain
\begin{equation}\label{e:derivative_inverseu}
\partial_r u(r) = I'_n(r) \int_r^\infty K_n(\rho) f(\rho) \rho \;d\rho +
  K'_n(r) \int_0^r I_n(\rho) f(\rho) \rho \;d\rho .
  \end{equation}
Notice that for $n \geq 1$, the modified Bessel functions satisfy,
\[
I'_n(r) = I_{n-1} - \frac{n}{r} I_n(r), \qquad K'_n(r) = K_{n-1} - \frac{n}{r} K_n(r),\]
so that  
\[ I'_n(r) \sim \rmO(I_n(r)/r)\qquad K'_n(r) \sim \rmO(K_n(r)/r)\quad \mbox{ as}\quad  r  \to 0.\]
Therefore, the results from Step 1 show that 
\begin{equation}\label{e:ineq_ur}
 \| \partial_r u \|_{L^2_{r,\sigma+1,\gamma}(\R^2)} \leq \|f \|_{L^2_{r,\sigma+2,\gamma}(\R^2)}
 \end{equation}
On the other hand, for $n=0$, 
\[ I_0'(r) = I_1(r) \sim \rmO(r) \qquad K'_0(r) = -K_1(r) \sim \rmO(1/r) \]
and again we find that inequality \eqref{e:ineq_ur} is satisfied.

{\bf Step 3:} Finally, we show that the operator has a trivial nullspace. 
This follows from noticing that the only elements in the kernel of the operator are the modified Bessel functions $K_n(r)$ and $I_n(r)$. However, since $I_n(r)$ grows exponentially, it does not live in $H^2_{r,\sigma, \gamma}(\R^2)$ for any value of $\gamma \in \R$. On the other hand, a short calculations shows that $K_n(r)$ are part of this weighted Sobolev space if $\sigma > n -1$. Therefore these functions are not part of the domain of the operator when $\sigma<-n-1$.

\end{proof}

\begin{Lemma}\label{l:invertible_helmoltz}
Fix $n \in \N $ and take $-n-1 < \sigma < n-1$.
Then, the operator
\[ \Delta_n - \Id: H^2_{r,\sigma, \gamma}(\R^2) \longrightarrow L^2_{r,\sigma +2, \gamma}(\R^2)\]
is invertible.
\end{Lemma}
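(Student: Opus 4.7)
The plan is to reuse the Green's function representation \eqref{e:inverseu} from Lemma \ref{l:surjective_helmholtz} and show that, in the range $-n-1 < \sigma < n-1$, it defines a bounded right-inverse from all of $L^2_{r,\sigma+2,\gamma}(\R^2)$ to $H^2_{r,\sigma,\gamma}(\R^2)$, without restricting $f$ to the codimension-one subspace $R$. Combined with a trivial kernel, this yields invertibility. For injectivity, the homogeneous ODE has the two independent solutions $I_n$ and $K_n$: the first grows exponentially at infinity and is never in the domain, while a direct computation using $K_n(r) \sim r^{-n}$ near the origin shows $K_n \in L^2_{r,\sigma,\gamma}(B_1)$ if and only if $\sigma > n-1$, so for $\sigma < n-1$ neither solution belongs to $H^2_{r,\sigma,\gamma}(\R^2)$.

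For surjectivity, I set $u = G \ast f$ and repeat the three-step scheme of Lemma \ref{l:surjective_helmholtz}: bound $u$ in $L^2_{r,\sigma,\gamma}$ on $\R^2 \setminus B_1$ and on $B_1$ separately, then bound $\partial_r u$ in $L^2_{r,\sigma+1,\gamma}$, and finally apply the elliptic interpolation bound of Lemma \ref{l:interpolation_2}. The exterior estimate on $\R^2 \setminus B_1$ is insensitive to $\sigma$ and carries over verbatim from Step 1.1 of the earlier proof. The interior estimate for the term $D = \|K_n(r) \int_0^r I_n(\rho) f(\rho) \rho\,d\rho\|_{L^2_{r,\sigma,\gamma}(B_1)}$ is likewise identical to Step 1.2: the change of variables $\tau = \ln r$, $w = v(\rme^\tau)\rme^{-\beta\tau}$ with $\beta = -(\sigma - n + 1) > 0$ reduces the estimate to Young's inequality and produces the upper constraint $\sigma < n - 1$.

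The main novelty, and the main obstacle, is the interior estimate for $E = \|I_n(r) \int_r^\infty K_n(\rho) f(\rho) \rho\,d\rho\|_{L^2_{r,\sigma,\gamma}(B_1)}$. In Lemma \ref{l:surjective_helmholtz} this was handled by using the orthogonality $\int_0^\infty K_n f \rho\,d\rho = 0$ to rewrite the integral as one from $0$ to $r$; here no such reduction is available. I would split $\int_r^\infty = \int_r^1 + \int_1^\infty$. The outer piece is constant in $r$, bounded by Cauchy--Schwarz against $\|f\|_{L^2_{r,\sigma+2,\gamma}(\R^2 \setminus B_1)}$ using the exponential decay of $K_n$, and integrates against $I_n(r)^2 r^{2\sigma+1} \sim r^{2n+2\sigma+1}$ on $B_1$ precisely when $\sigma > -n-1$. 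For the inner piece I set $g(\rho) = K_n(\rho) f(\rho) \rho$ and $v(r) = \int_r^1 g(\rho)\,d\rho$, so that $\partial_r v = -g$ with $v(1) = 0$, and apply the same change of coordinates, now with $\beta = \sigma + n + 1 > 0$. The transformed ODE carries its decay condition at $\tau = 0$ rather than $\tau = -\infty$, and its solution is given by the backward formula $w(\tau) = -\int_\tau^0 \rme^{\beta(\tau - s)} h(s)\,ds$, which is $L^2(-\infty, 0)$-bounded via Young's inequality whenever $\beta > 0$; this produces the lower constraint $\sigma > -n - 1$.

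The derivative bound on $\partial_r u$ in $L^2_{r,\sigma+1,\gamma}$ follows from the recurrence relations for $I_n'$ and $K_n'$ used in Step 2 of the earlier proof, which reduce the new estimate to the ones already established. Combining the upper and lower constraints, the admissible range is exactly $-n-1 < \sigma < n-1$, and assembling the three steps yields $\|u\|_{H^2_{r,\sigma,\gamma}} \leq C\,\|f\|_{L^2_{r,\sigma+2,\gamma}}$, which together with injectivity gives the bounded inverse.
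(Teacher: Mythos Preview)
Your proposal is correct and follows essentially the same approach as the paper: same injectivity argument, same reuse of the exterior and $D$-term estimates from Lemma~\ref{l:surjective_helmholtz}, and the same change of variables $\tau=\ln r$ with Young's inequality to extract the constraint $\sigma>-n-1$ from the $E$-term. The only organizational difference is that you split $\int_r^\infty = \int_r^1 + \int_1^\infty$ and treat the outer piece by Cauchy--Schwarz, whereas the paper keeps the full integral $\int_r^\infty$, imposes the boundary condition at $\tau\to\infty$, and instead splits when bounding $\|h\|_{L^2(\R)}$ in terms of $\|f\|_{L^2_{r,\sigma+2,\gamma}(\R^2)}$; the two are equivalent rearrangements of the same estimate.
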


\begin{proof}
To prove the operator is surjective one can follow the same steps as in the proof of Lemma \ref{l:surjective_helmholtz}, except when dealing with the inequality $\|u\|_{L^2_{r,\sigma,\gamma}(B_1)}\leq \|f \|_{L^2_{r,\sigma+2,\gamma}(B_1)}$. In this case we again write
$\|u\|_{L^2_{r,\sigma,\gamma}(B_1)}\leq E+D$ with 
\begin{align*}
E^2 = & \int_0^1\left[I_n(r)  \int_r^\infty K_n(\rho) f(\rho) \; \rho d\rho \right]^2 r^{2 \sigma} \;r dr\\
D^2 = & \int_0^1 \left[K_n(r)  \int_0^r I_n(\rho) f(\rho) \; \rho d\rho \right]^2 r^{2 \sigma} \;r dr
\end{align*}
and show that $E^2 \leq \|f \|_{L^2_{r,\sigma+2, \gamma}(\R^2)}$ directly, without having to change the order of integration.

Letting $g(\rho) = - K_n(\rho) f(\rho) \rho $ and $v (r) = \int_\infty^r g(\rho) \; d\rho,$ we have that $v$ solves 
\begin{equation}\label{e:v_eq}
 \partial_r v = g(r) \qquad \lim_{r \to \infty} v(r) =0.
\end{equation}
Since $I_n(r) \sim \rmO(r^n)$ for $ r \sim 0$, our aim is to show that
\[ E^2 \leq \| r^n v(r) \|^2_{L^2_{r,\sigma,\gamma}(B_1)} =\|  v(r) \|^2_{L^2_{r,\sigma+n ,\gamma}(B_1)} \leq \|f \|_{L^2_{r,\sigma+2, \gamma}(R_2)}.  \]
As before, we use the change of coordinates \eqref{e:change_coor} with $\beta = -(\sigma+n+1)$ to arrive at
\[ \partial_\tau(w \rme^{\beta \tau} ) = h(\tau ) \rme^{\beta \tau}, \qquad \lim_{\tau \to \infty} w(\tau) \rme^{\beta \tau} =0.\]
This o.d.e. then has solutions 
\[ w(\tau) = \int_\infty^\tau h(s)\rme^{\beta(s-\tau)} \;ds\]
which are in $L^2(-\infty,0)$ provided $\beta <0$.  This holds for our choice $\sigma > -n-1$, so that by Young's inequality we have that
\[ \| w\|_{L^2(-\infty,0) } \leq - \frac{1}{\beta} \| h\|_{L^2(\R)}.\]
To relate $h$ back to $f$, when $n \geq 1$ we determine
\begin{align*}
 \|h \|_{L^2(\R)} & = \int_0^\infty |g|^2 r^{-2 \beta} r^2 \frac{1}{r} \;dr \\
 & = \int_0^1 |K_n(r) f(r) r|^2 r^{-2\beta} r \;dr  +  \int_1^\infty |K_n(r) f(r) r|^2 r^{-2\beta} r \;dr\\
 & \leq C\int_0^1 | f(r) r|^2 r^{-2\beta-2n} r \;dr + \int_1^\infty | f(r) r|^2 \langle r \rangle^{2\gamma-2} r \;dr\\
 & \leq C \|f\|_{L^2_{r,\sigma+2,\gamma}(\R^2)}
\end{align*}
where the second to last inequality follows from the algebraic behavior of $K_n(r)$ near the origin and its exponential decay at infinity. The last line is a consequence of $\beta  = -(\sigma +n +1)$.
The definition of $w$ and our choice of $\beta$ then imply
\[ E^2 \leq \|  v(r) \|^2_{L^2_{r,\sigma+n ,\gamma}(B_1)} = \|w\|_{L^2(-\infty,0)} \leq -\frac{1}{\beta} \|h\|_{L^2(\R)}  \leq  -\frac{1}{\beta} \|f \|_{L^2_{r,\sigma+2, \gamma}(R_2)}.  \]

To show the bound  $D \leq \|f \|_{L^2_{r,\sigma+2,\gamma}(\R^2)}$, one can  follow the same analysis as in Lemma \ref{l:surjective_helmholtz}.

To show the operator is injective, notice that the only possible elements in the kernel of the operator are the modified Bessel functions $K_n(r)$. However, these functions are not in $L^2_{r,\sigma,\gamma}(\R^2)$ for our choice $\sigma < n-1$.

\end{proof}

%%%%%%%%%%%%%%%%%%%%%%%%%%%%%%%%%%%%%%%%%%%%%%%
\begin{Proposition}\label{p:injective}
Let $\sigma <-1$, $\gamma \in \R$. Then, the operator
\[ \Delta - \Id: H^2_{\sigma, \gamma}(\R^2) \longrightarrow L^2_{\sigma +2, \gamma}(\R^2)\]
is injective, with closed range, and cokernel that depends on $\sigma$. More precisely, for $n \in \N \cup \{0\}$ and with $-n-2 < \sigma < -n-1$, we have that the cokernel is spanned by the set,
\[ M_n =\left\{ K_m(r) \cos(m \theta),  K_m(r) \sin(m \theta) \; \Big| \quad m = 0,1,2, \cdots n \right\}.\]

where $K_m(r)$ denotes the modified Bessel function of the second kind and of order $m$.
\end{Proposition}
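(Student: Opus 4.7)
The plan is to exploit the orthogonal Fourier decomposition \eqref{e:directsum2}: every $u\in H^2_{\sigma,\gamma}(\R^2)$ is uniquely written as $u=\sum_{m\in\Z}u_m(r)\rme^{\rmi m\theta}$, and the equation $(\Delta-\Id)u=f$ decouples into the family of radial equations $(\Delta_m-\Id)u_m=f_m$, with $\Delta_m=\partial_{rr}+r^{-1}\partial_r-m^2/r^2$. Since $\Delta_m$ depends only on $|m|$, each mode reduces to the radial operator analyzed in Lemmas \ref{l:surjective_helmholtz} and \ref{l:invertible_helmoltz}, and the proof reduces to assembling mode-by-mode information.

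In the regime $-n-2<\sigma<-n-1$, I would split the modes into two families. For $|m|\leq n$ one has $\sigma<-n-1\leq -|m|-1$, so Lemma \ref{l:surjective_helmholtz} applies and produces injectivity, closed range, and a one-dimensional cokernel spanned by $K_{|m|}(r)$. For $|m|\geq n+1$ one has $-|m|-1\leq -n-2<\sigma<-n-1\leq |m|-1$, so Lemma \ref{l:invertible_helmoltz} gives invertibility of the mode. In particular every mode is injective, hence $\ker(\Delta-\Id)=\{0\}$, and combining the pairs of complex modes $\pm m$ into the real basis $\{\cos(m\theta),\sin(m\theta)\}$ identifies the cokernel as the $(2n+1)$-dimensional space spanned by $M_n$ (noting that $K_0(r)\sin(0\theta)=0$).

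The main obstacle will be the closed range statement, since a direct sum of closed-range operators need not itself have closed range: one needs a uniform a priori bound on the mode-wise inverses as $|m|\to\infty$. I would obtain this by revisiting the Green's function estimates in the proofs of Lemmas \ref{l:surjective_helmholtz} and \ref{l:invertible_helmoltz} and tracking the $m$-dependence of the constants. The key observation is that the parameter controlling the Young-inequality step is $\beta=-(\sigma\pm|m|+1)$, whose reciprocal decays like $1/|m|$ for large $|m|$; moreover, the modified Bessel asymptotics used to pass between the auxiliary variable and $f$ contribute factors that compensate once $I_{|m|}$ is paired with $K_{|m|}$ inside the Green's kernel, keeping the remaining constants uniformly bounded in $m$. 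This produces a bounded right inverse for $\Delta-\Id$ defined on the orthogonal complement of the span of $M_n$, which simultaneously yields the closed-range property and the stated cokernel description.
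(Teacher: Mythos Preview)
Your argument follows the paper's: decompose into Fourier modes, apply Lemma~\ref{l:surjective_helmholtz} for $|m|\le n$ and Lemma~\ref{l:invertible_helmoltz} for $|m|\ge n+1$, and read off injectivity together with the cokernel $M_n$. For the closed-range step the paper is terser---it defines $R=\{f\in L^2_{\sigma+2,\gamma}:\langle f,h\rangle=0\ \forall\,h\in M_n\}$, notes $R$ is closed since each $h\in M_n$ defines a bounded functional on $L^2_{\sigma+2,\gamma}$ (this is where $\sigma<-1$ enters), and then asserts that the modal analysis identifies the range with $R$---whereas you correctly flag that this identification implicitly requires the uniform-in-$m$ bound on the modal inverses, and the sketch you give for that bound is on the right track.
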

\begin{proof}
To analyze the operator we use the decomposition of $H^2_{\sigma, \gamma}(\R^2)$ into the direct sum $\oplus h^2_{n,\sigma, \gamma}$ and write
\begin{align} \nonumber
(\Delta - \Id) u = & f\\
\label{e:decomp}
\sum_m ( \Delta_m - 1)u_m(r)\; \rme^{\rmi m \theta} =& \sum_m f_m(r) \;\rme^{\rmi m \theta} 
\end{align}
Fix $n \in \N \cup \{0\}$ and take $\sigma \in (-n-2, -n-1)$. Then, for all those $m \in \Z$ satisfying 
$$-|m|-2 \leq -n -2<   \sigma,  $$
 Lemma \ref{l:invertible_helmoltz} shows that the operators $( \Delta_m - 1): H^2_{r,\sigma,\gamma}(\R^2) \longrightarrow L^2_{r,\sigma+2,\gamma}(\R^2)$ are invertible. On the other hand, for those $m \in \Z$ satisfying 
 $$\sigma< -n-1 \leq  -|m|-1,$$
  Lemma \ref{l:surjective_helmholtz} shows that these same operators are injective, have a closed range, and have as cokernel the modified Bessel function $K_m(r)$.
  
  Let \[ R = \{ f \in L^2_{\sigma+2,\gamma}(\R^2) \mid \langle f, h \rangle =0, \quad \forall h \in M_n\}.\]
where $M_n$ is defined as in the statement of this proposition. Then, because $\sigma < -1$, each function $h \in M_n$
defines a bounded linear functional in $L^2_{\sigma +2, \gamma}(\R^2)$. 
The space $R$ is therefore the union of the nullspaces of these functionals and is therefore closed. The decomposition \eqref{e:decomp} together with the results of Lemma \ref{l:surjective_helmholtz}
 then show that this subspace is the range of  our operator 
 $ \Delta - \Id: H^2_{\sigma, \gamma}(\R^2) \longrightarrow L^2_{\sigma +2, \gamma}(\R^2)$.

\end{proof}

%%%%%%%%%%%%%%%%%%%%%%%%%%%%%
We now use the results of  Proposition \ref{p:injective} and duality to prove Fredholm properties of the operator when  $\sigma >-1$.

\begin{Proposition}\label{p:surjective}
Let $\sigma >-1$, $\gamma \in \R$. Then, the operator
\[ \Delta - \Id: H^2_{\sigma, \gamma}(\R^2) \longrightarrow L^2_{\sigma +2, \gamma}(\R^2)\]
is surjective with kernel that depends on $\sigma$. More precisely, for $n \in \N \cup \{0\}$ and with $n-1 < \sigma < n$, we have that the kernel is spanned by the set,
\[ M_n =\left\{ K_m(r) \cos(m \theta),  K_m(r) \sin(m \theta) : \quad m = 0,1,2, \cdots n \right\}.\]

where $K_m(r)$ denotes the modified Bessel function of the second kind and of order $m$.
\end{Proposition}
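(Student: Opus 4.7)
The plan is to mirror the mode-by-mode strategy used for Proposition \ref{p:injective}, exploiting the formal self-adjointness of $\Delta - \Id$ to convert the injectivity/cokernel statement of Lemma \ref{l:surjective_helmholtz} into a surjectivity/kernel statement on the conjugate weight $\tilde\sigma = -\sigma-2$. Using the decomposition $H^2_{\sigma,\gamma}(\R^2) = \oplus_m h^2_{m,\sigma,\gamma}$ and writing $f = \sum f_m(r)\, \rme^{\rmi m \theta}$, the equation $(\Delta - \Id)u = f$ uncouples into the radial problems
$$T_m := \Delta_m - \Id : H^2_{r,\sigma,\gamma}(\R^2) \longrightarrow L^2_{r,\sigma+2,\gamma}(\R^2), \qquad m \in \Z.$$

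Fix $\sigma \in (n-1, n)$. For modes with $|m| \geq n+1$, the inequality $-|m|-1 < \sigma < |m|-1$ holds, so Lemma \ref{l:invertible_helmoltz} yields $T_m$ invertible; these modes contribute no kernel and are solved uniquely. For modes with $|m| \leq n$ we have $\sigma > |m|-1$, so $K_m$ belongs to $H^2_{r,\sigma,\gamma}(\R^2)$ (the near-origin integrand $|K_m(r)|^2 r^{2\sigma+1} \sim r^{2\sigma -2|m| +1}$ is integrable precisely because $\sigma > |m|-1$, with the obvious logarithmic modification for $m=0$); since $I_m$ grows exponentially, we conclude $\ker T_m = \mathrm{span}(K_m)$ exactly.

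To establish the surjectivity of $T_m$ for $|m|\leq n$, I would invoke Lemma \ref{l:surjective_helmholtz} on the conjugate weight. Setting $\tilde\sigma = -\sigma-2 \in (-n-2,-n-1) \subset (-\infty,-|m|-1)$, that lemma guarantees the companion radial operator
$$\tilde T_m : H^2_{r,\tilde\sigma,-\gamma}(\R^2) \longrightarrow L^2_{r,\tilde\sigma+2,-\gamma}(\R^2)$$
is injective with closed range. Under the $L^2$-pairing $(f,g)\mapsto\int_0^\infty fg\,r\,dr$, the spaces $L^2_{r,\sigma+2,\gamma}$ and $L^2_{r,\tilde\sigma+2,-\gamma}$ are isometrically dual, and $T_m$, $\tilde T_m$ are formal adjoints on the common core $C^\infty_0(\R_+)$. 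The kernel of the Banach adjoint $T_m^* : L^2_{r,-\sigma-2,-\gamma} \to (H^2_{r,\sigma,\gamma})^*$ then consists of distributional solutions of $(\Delta_m - \Id)v = 0$ lying in $L^2_{r,-\sigma-2,-\gamma}$. Elliptic regularity forces $v \in \mathrm{span}(K_m,I_m)$; exponential growth kills $I_m$, and a near-origin integrability check rules out $K_m$ whenever $\sigma > -1$ (since the requirement $\sigma < -|m|-1$ fails for every $|m| \geq 0$), so $\ker T_m^* = \{0\}$. Closedness of $\mathrm{range}(T_m)$ follows by transferring the closed range of $\tilde T_m$ through the closed range theorem, and $T_m$ is surjective. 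Reassembling, $\ker(\Delta-\Id) = M_n$ and the operator is onto $L^2_{\sigma+2,\gamma}(\R^2)$.

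The main obstacle is the rigorous execution of the duality step: the weighted $H^2$ spaces are not self-dual in a convenient way, and $T_m^*$ lands in the abstract dual $(H^2_{r,\sigma,\gamma})^*$ rather than in another weighted Sobolev space, so identifying $T_m^*$ with the formal expression $\Delta_m - \Id$ acting distributionally, and transferring closed range from $\tilde T_m$ to $T_m$, both require care. Should this duality prove awkward, the alternative is to build a right inverse for $T_m$ directly from the Green's function representation \eqref{e:inverseu}, rerunning the integral estimates of Lemma \ref{l:surjective_helmholtz} with the change of variables $\tau = \ln r$ but with $\beta$ of opposite sign and boundary condition at $\tau \to +\infty$ (as in the proof of Lemma \ref{l:invertible_helmoltz}), so as to cover the new weight range $\sigma > |m|-1$.
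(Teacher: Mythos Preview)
Your strategy --- duality through the conjugate weight $\tilde\sigma=-\sigma-2$ --- is exactly the paper's, and your mode-by-mode bookkeeping (Lemma \ref{l:invertible_helmoltz} for $|m|\ge n+1$, kernel identification for $|m|\le n$) is correct. The difference is in how the duality obstacle you flag is resolved.

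The paper avoids the awkward target space $(H^2_{\sigma,\gamma})^*$ by first \emph{extending} the operator to
\[
(\Delta-\Id)^\dagger : L^2_{\sigma,\gamma}(\R^2)\longrightarrow H^{-2}_{\sigma+2,\gamma}(\R^2),
\]
whose Banach adjoint is precisely $(\Delta-\Id):H^2_{-(\sigma+2),-\gamma}\to L^2_{-\sigma,-\gamma}$, i.e.\ the operator already handled by Proposition \ref{p:injective}. Duality then gives, cleanly, that the extended operator is surjective with kernel $M_n$. The remaining work is to descend from the extended operator to the original $H^2_{\sigma,\gamma}\to L^2_{\sigma+2,\gamma}$ map: take a putative Weyl sequence $u_k$ with $\|u_k\|_{H^2_{\sigma,\gamma}}=1$ and $(\Delta-\Id)u_k\to 0$ in $L^2_{\sigma+2,\gamma}$, use the closed range of the extended operator to find $v_k\in M_n$ with $\|u_k-v_k\|_{L^2_{\sigma,\gamma}}\to 0$, and then invoke the interpolation inequality of Lemma \ref{l:interpolation_2},
\[
\|u\|_{H^2_{\sigma,\gamma}}\le C\big(\|(\Delta-\Id)u\|_{L^2_{\sigma+2,\gamma}}+\|u\|_{L^2_{\sigma,\gamma}}\big),
\]
to upgrade this to $\|u_k-v_k\|_{H^2_{\sigma,\gamma}}\to 0$. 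This simultaneously gives closed range and the kernel for the original operator, at the full-operator level rather than mode by mode.

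So your primary plan would work, but the cleanest route past the obstacle you correctly identify is this ``extend, dualize, then descend via interpolation'' manoeuvre rather than wrestling with $(H^2_{r,\sigma,\gamma})^*$ directly. Your fallback (rerunning the Green's function estimates with reversed $\beta$) would also succeed and is closer in spirit to what Lemmas \ref{l:surjective_helmholtz}--\ref{l:invertible_helmoltz} already do, but is more laborious.
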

\begin{proof}
Fix $\gamma \in $, $n \in \N \cup\{0\}$ and take $-n-1< \sigma <n$.
Consider the extended operator,
\begin{equation}\label{e:larger}
 (\Delta - \Id)^\dagger :L^2_{\sigma, \gamma}(\R^2) \longrightarrow H^{-2}_{\sigma +2, \gamma}(\R^2).
 \end{equation}
Then, by Proposition \ref{p:injective} its dual,
\[ (\Delta - \Id)^* :H^2_{-(\sigma+2), -\gamma}(\R^2) \longrightarrow L^2_{-\sigma, -\gamma}(\R^2)\]
is injective, with closed range, and cokernel given by $M_n$. 
By duality, the operator \eqref{e:larger} is surjective with kernel spanned by $M_n$.
To show that the `smaller' operator
 $(\Delta - \Id) : H^2_{\sigma, \gamma}(\R^2) \longrightarrow L^2_{\sigma +2, \gamma}(\R^2)$
 has closed range, consider  a sequence $\{ u_n\}$ in $H^2_{\sigma, \gamma}(\R^2)$ satisfying
 \[ \| u_n\|_{H^2_{\sigma,\gamma}(\R^2) } =1 \qquad \| (\Delta - \Id) u_n \|_{L^2_{\sigma+2,\gamma}(\R^2)} \to 0.\]
 Because of the embedding $L^2_{\sigma+2, \gamma}(\R^2) \subset H^{-2}_{\sigma +2, \gamma}(R^2)$ we also see that
  \[ \| (\Delta - \Id) u_n \|_{H^{-2}_{\sigma+2,\gamma}(\R^2)} \to 0.\]
Then, since \eqref{e:larger} has closed range, we can find $v_n \in M_n$  such that
 \[ \| u_n - v_n \|_{L^2_{\sigma, \gamma}(\R^2) } \to 0.\]
These two results, together with inequality 
 \[ \| u\|_{H^2_{\sigma, \gamma}(\R^2)} \leq c \left(  \| (\Delta - \Id) u  \|_{L^2_{\sigma+2,\gamma}(\R^2)} + \|u\|_{L^2_{\sigma, \gamma}(\R^2)}\right)
 \]
 shown in Lemma \ref{l:interpolation_2} in the Appendix, imply that $\| u_n -v_n \|_{H^2_{\sigma, \gamma}(\R^2)} \to 0$. As a result, we conclude that the operator has closed range and kernel spanned by $ M_n$.
\end{proof}

%%%%%%%%%%%%%%%%%%%%%%%%%%%%%%%%%%%%%%%%
%%%%%%% DELTA -1/R^2
%%%%%%%%%%%%%%%%%%%%%%%%%%%%%%%%%%%%%%%%

\section{Fredholm Properties for $\Delta -1/r^2$}\label{s:laplace}
In this section we prove Fredholm properties for the operator
 \[\Delta -1/r^2: M^{2,2}_{\sigma,\gamma}(\R^2) \longrightarrow L^2_{\sigma+2, \gamma+2}(\R^2),\]
summarized in Theorem \ref{t:Laplace} stated in the introduction.
We split the proof into four cases depending on the value of the weights $\sigma $ and $\gamma$. These
four results are summarized in Propositions \ref{p:fredholm1}- \ref{p:fredholm4}.

%%%%%%%%%%%%%SURJECTIVE %%%%%%%%%%%%%%%%%

\begin{Proposition}\label{p:fredholm1}
Let $n \in \Z$, $\sigma >-2$ and $\gamma<0$. Define $q(n)=\sqrt{n^2+1}$ and  assume  that $\sigma +1 \neq \pm q(n)$
and $\gamma +1 \neq \pm q(n) $. Then, the operator
$$ \Delta -1/r^2: M^{2,2}_{\sigma,\gamma}(\R^2) \longrightarrow L^2_{\sigma+2, \gamma+2}(\R^2).$$
is surjective with a finite dimensional kernel that depends on $\gamma$ and $\sigma$. More precisely,
letting $k,n, m \in \N \cup \{0\}$,
we have that
\begin{itemize}
\item if $ - q(n+1) < \gamma +1< - q(n)$ the kernel includes
\[ N_n = \left\{ r^{q(k)}\cos ( k \theta) ,\; r^{q(k)} \sin( k \theta) \Big| \quad k = 0,1,2, \cdots n \right \}.\]
 \item In addition, if $  q(m) < \sigma +1< q(m+1)$, the kernel is also composed of
 \[ M_m =\left\{ r^{-q(k)}\cos ( k \theta) ,\; r^{-q(k)} \sin( k \theta) \Big|  \quad k = 0,1,2, \cdots m \right \}.\]

\end{itemize}
On the other hand, if $\gamma +1 = \pm q(n) $ or $\sigma +1 \neq \pm q(n)$ for some $n \in \Z$, the operator does not have a closed range.
\end{Proposition}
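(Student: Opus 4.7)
The plan is to follow the mode-by-mode approach of Section \ref{s:helmholtz}. Using the direct sum decomposition \eqref{e:directsum1}, I write $u=\sum_k u_k(r)e^{ik\theta}$ and reduce the equation $(\Delta-1/r^2)u=f$ to the family of radial ODEs
$$L_k u_k \;:=\; \partial_{rr} u_k + \frac{1}{r}\partial_r u_k - \frac{k^2+1}{r^2} u_k \;=\; f_k, \qquad k\in\Z,$$
which, thanks to their Cauchy--Euler structure, have pure monomial homogeneous solutions $r^{\pm q(k)}$ with $q(k)=\sqrt{k^2+1}$. The Wronskian $W(\rho)=2q(k)/\rho$ gives the explicit Green's function
$$G_k(r,\rho)\;=\;\frac{1}{2q(k)}\,\min(r,\rho)^{q(k)}\,\max(r,\rho)^{-q(k)},$$
yielding the candidate particular solution
$$u_k(r) \;=\; \frac{r^{-q(k)}}{2q(k)}\int_0^r \rho^{q(k)+1}f_k(\rho)\,d\rho \;+\; \frac{r^{q(k)}}{2q(k)}\int_r^\infty \rho^{-q(k)+1}f_k(\rho)\,d\rho.$$

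To determine the kernel I would check, for each $k$, whether $r^{\pm q(k)}$ lies in $m^{2,2}_{k,\sigma,\gamma}$. A monomial $r^\lambda$ belongs to this space precisely when $\lambda+\sigma+1>0$ (near the origin) and $\lambda+\gamma+1<0$ (at infinity), since every derivative gains an extra power of $r$ that matches the extra factor in the weight. Under the hypotheses $\sigma>-2$, $\gamma<0$, $q(m)<\sigma+1<q(m+1)$, and $-q(n+1)<\gamma+1<-q(n)$, the condition at the origin is automatic for $r^{q(k)}$ and selects $|k|\le m$ for $r^{-q(k)}$, while the condition at infinity is automatic for $r^{-q(k)}$ and selects $|k|\le n$ for $r^{q(k)}$. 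Pairing with $\cos(k\theta)$ and $\sin(k\theta)$ recovers exactly the spanning set $N_n\cup M_m$ claimed in the proposition.

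For surjectivity I would bound the Green's function formula in $m^{2,2}_{k,\sigma,\gamma}$-norm mode-by-mode, closely mirroring the strategy used in Lemma \ref{l:surjective_helmholtz}. The key tool is the logarithmic change of variables $\tau=\log r$: each of the two Green's function integrals becomes a one-sided convolution of the form $w(\tau)=\int_{-\infty}^\tau e^{\beta(s-\tau)}h(s)\,ds$, with the shift $\beta$ determined by $\sigma+1\pm q(k)$ (for the pieces supported in $B_1$) or $\gamma+1\pm q(k)$ (for the pieces on $\R^2\setminus B_1$). After this rewriting, Young's inequality gives a bound proportional to $1/|\beta|$, and the non-resonance hypotheses $\sigma+1\neq\pm q(k)$ and $\gamma+1\neq\pm q(k)$ are precisely what keep every $\beta$ nonzero and every Young constant finite. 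For the finitely many modes where one of $r^{\pm q(k)}$ already sits in the domain, the candidate solution has to be corrected by projecting off that element, using the standard Fredholm-alternative maneuver carried out in Lemma \ref{l:invertible_helmoltz}. Full $M^{2,2}$-regularity up to second derivatives then follows as in the Helmholtz case, by combining local elliptic regularity with a Lemma \ref{l:interpolation_2}-type estimate adapted to the doubly weighted setting.

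The main obstacle, which also yields the final non-closed-range statement, is the behavior at the resonant weights. If $\gamma+1=-q(n)$, then $r^{q(n)}e^{in\theta}$ sits \emph{exactly} on the integrability threshold at infinity, and the other three resonances are symmetric. To produce a Weyl sequence I would take $u_j(r,\theta)=\chi_j(\log r)\,r^{q(n)}e^{in\theta}$, where $\chi_j$ is a smooth cutoff equal to one on $[0,j]$ and supported in $[-1,2j]$. Because $r^{q(n)}e^{in\theta}$ is a homogeneous solution of $L$, the image $L u_j$ reduces to commutator terms $[L,\chi_j]\,r^{q(n)}e^{in\theta}$, which are concentrated on two $\log r$-annuli of bounded width and have $L^2_{\sigma+2,\gamma+2}$-norm bounded independently of $j$. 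The borderline integrability, on the other hand, forces $\|u_j\|_{M^{2,2}_{\sigma,\gamma}}\sim j^{1/2}$, so after normalization the sequence has unit norm, image tending to zero, and no convergent subsequence modulo any finite-dimensional subspace, ruling out closed range.
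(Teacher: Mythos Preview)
Your proposal is correct and follows essentially the same route as the paper: mode decomposition, the logarithmic change of variables $\tau=\log r$, and Young's inequality on the resulting one-sided convolutions, followed by the kernel check and a Weyl-sequence argument at the resonant weights. The paper's presentation differs only cosmetically---after the change of variables it recasts each radial equation as the constant-coefficient first-order system $W_\tau=AW+G$ with eigenvalues $\lambda_\pm=(\gamma+1)\pm q(k)$ (resp.\ $(\sigma+1)\pm q(k)$) rather than writing the Green's function explicitly, and it invokes Lemma~\ref{l:interpolation} (not Lemma~\ref{l:interpolation_2}) for the second-derivative bound; note also that it is the sign hypotheses $\gamma<0$ and $\sigma>-2$, rather than mere non-resonance, that force $\lambda_-<0$ (resp.\ $\lambda_+>0$) for \emph{every} $k$ and thereby make the Young constants uniformly bounded by $1/|\gamma|$ and $1/(\sigma+2)$, which is what lets you sum over modes.
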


\begin{proof}
Let $f \in L^2_{\sigma+2, \gamma+2}(\R^2)$ and suppose $(\Delta - \frac{1}{r^2})u =f$. We may then write,
\[ \sum_n \Delta_{n^2+1} u_n \rme^{\rmi n \theta} = \sum_n f_n \rme^{\rmi n \theta}.\]
where $ \Delta_{n^2+1} = \partial_{rr}  + \frac{1}{r} \partial_r  - \frac{n^2 +1}{r^2} $.

In what follows we show that for our choice of $\sigma$ and $\gamma$, the following inequalities hold
\begin{align}\label{e:ineq_1}
\| u_n \|_{M^{1,2}_{r, \sigma, \gamma}(\R^2 \setminus B_1) }& \leq C \| f_n \|_{L^2_{\sigma+2, \gamma+2}(\R^2 \setminus B_1)  }\\ 
\label{e:ineq_2}
 \| u_n \|_{M^{1,2}_{r, \sigma, \gamma}(B_1) } &\leq C \| f_n \|_{L^2_{\sigma+2, \gamma+2}(B_1)},
\end{align}
where the constant $C$ is independent of $n$. These inequalities together with the equivalence \eqref{e:directsum1} stated in Section \ref{s:preliminaries} then imply that 
\[ \| u \|_{M^{1,2}_{ \sigma, \gamma}(\R^2)} \leq C \| f \|_{L^2_{\sigma+2, \gamma+2}(\R^2)  }.\]

Thus, to prove the surjectivity of the operator we are left with showing that
$$\| D^\alpha u\|_{L^2_{\sigma+2, \gamma+2}(\R^2)} \leq C \| f \|_{L^2_{\sigma+2, \gamma+2}(\R^2)}$$
 for all multi-indices $\alpha$, satisfying
$|\alpha|=2$. However, notice that this last point easily follows from the equation, which gives
\[ \| \Delta u \|_{L^2_{\sigma +2, \gamma+2}(\R^2) } \leq \| f\|_{L^2_{\sigma+2, \gamma+2}(\R^2)} + \| u\|_{L^2_{\sigma, \gamma}(\R^2)}\]
and the interpolation inequality,
\[\| D^2 u\|_{L^2_{\sigma+ 2,\gamma +2}(\R^2)} \leq C \left[ \| \Delta u \|_{L^2_{\sigma +2, \gamma+2}(\R^2)} + 
 \| D u \|_{L^2_{\sigma +1, \gamma+1}(\R^2)} \right] \]
proved as inequality \eqref{e:interpolation} in lemma \ref{l:interpolation} in the Appendix.

We now prove inequality \eqref{e:ineq_1}. Choosing the rescaling $\tau = \ln r$ with $r \in [1,\infty)$, and the change of variables
\[ u_n = w \rme^{-\beta \tau}, \quad \beta = \gamma+1, \quad \mbox{and} \quad f_n = g\rme^{-\alpha \tau}, \quad \alpha = \gamma +3,\]
the equation $\Delta_{n^2+1}u_n = f_n$ can be written as
\begin{equation}\label{e:ode_w}
 w_{\tau \tau} -2\beta w_\tau +\beta^ 2 w -(n^2+1) w = g,
 \end{equation}
and we find that inequality \eqref{e:ineq_1} is equivalent to
\begin{equation}\label{e:ineq_w_gamma}
 \| w\|_{H^1[0,\infty)} \leq C \|g \|_{L^2[0,\infty)}.
 \end{equation}

The second order o.d.e. \eqref{e:ode_w} can be written as a first order system with constant coefficients,
\begin{equation}\label{e:first_order} \frac{dW}{d\tau} = A W + G(\tau),
\end{equation}
where $W= (w , w_\tau)$, $G = (0,g)$ and $ A \in \R^{2\times 2}$.
The eigenvalues of matrix $A$ are 
\[ \lambda_{\pm} = \beta \pm \sqrt{n^2 +1},\]
and the equation has a solutions $w$ in $H^1[0,\infty)$ provided at least one of the two eigenvalues $\lambda_{\pm}$ is negative.

 This condition is satisfied for all $n \in \Z$ if $\beta < 1$, or equivalently, if $\gamma <0$.
In this case, the variation of constants formula gives us one possible solution,
\[  W( \tau ) =    \rme^{ \lambda_- \tau} W( 0 )  + \int_{0}^\tau \rme^{\lambda_- (\tau - t)} P^s G(t) \;dt\]
where  $P^s$ is the projection onto the eigenspace of the negative eigenvalue $\lambda_- = \beta - \sqrt{n^2+1}$.
With out loss of generality we can pick $W(0) =0$ since, as shown below, the operator has a kernel, which we can use to change this value.
Then, inequality \eqref{e:ineq_w_gamma} 
can be obtained using Young's inequality for convolutions, from which we deduce that the constant
  $C =  \frac{-1}{\beta - \sqrt{n^2+1}} < \frac{-1}{\beta-1} = \frac{-1}{\gamma}$.

The same argument can be used to obtain inequality \eqref{e:ineq_2}. In this case we choose 
$\tau = \ln r$ with $r \in (0,1]$, and the change of variables
\[ u_n = w \rme^{-\beta \tau}, \quad \beta = \sigma+1, \quad \mbox{and} \quad f_n = g\rme^{-\alpha \tau}, \quad \alpha = \sigma +3,\]
We arrive at the same differential equation \eqref{e:ode_w}, but now inequality \eqref{e:ineq_2}  is given by
\begin{equation}\label{e:ineq_w_sigma}
 \| w\|_{H^1(-\infty,0]} \leq C \|g \|_{L^2(-\infty,0]}.
 \end{equation}

In order to obtain a solution $w$ in $H^1(-\infty,0]$ at least one of the eigenvalues $\lambda_{\pm}$ has to be positive. This condition holds for all $n \in \Z$ provided $\beta >-1$, or equivalently if $\sigma >-2$.
The solution is then given by the variation of constants
\[ W(\tau ) = \rme^{\lambda_+ \tau} W(0) +  \int_{0}^\tau \rme^{\lambda_+(\tau -t)} P^u G(t)  \;dt,\]
where we use the same notation as before, but now $P^u$ denotes the projection onto the eigenspace of the positive eigenvalue $\lambda_+ = \beta + \sqrt{n^2+1}$.
As before, we may pick $W(0)=0$ and, using Young's inequality for convolutions, we arrive at inequality \eqref{e:ineq_w_sigma} with constant $C= \frac{1}{\beta + \sqrt{n^2+1}} < \frac{1}{\beta + 1} = \frac{1}{\sigma +2}$.

Notice that when $\sigma = \pm \sqrt{n^2+1}$ or $\gamma = \pm \sqrt{n^2+1}$, one of the eigenvalues $\lambda_{\pm}$ is zero. We can then use elements in this center eigenspace to construct Weyl sequences, proving that the operator does not have a closed range.

To finish the proof of the proposition we just need to determine the elements of the kernel. It is straightforward to check that the functions $ r^{\pm q(n)}$ with $q(n) = \sqrt{n^2+1}$, $n \in\Z $, solve the equation
\[ \Delta_{n^2+1} r^{\pm q(n)} =0.\]
As a result,
the real and imaginary parts of $r^{\pm q(n)} \rme^{\rmi n \theta}$ 
are elements in the kernel of $\Delta - 1/r^2$. However, 
$ \mathrm{Re}[ r^{ q(n)} \rme^{\rmi n \theta}], \quad \mathrm{Im} [ r^{ q(n)} \rme^{\rmi n \theta}]$ 
 are only in the space $M^{2,2}_{\sigma, \gamma}$ provided
\[ \gamma +1< - \sqrt{n^2+1},\]
while 
$ \mathrm{Re}[ r^{ -d(n)} \rme^{\rmi n \theta}], \quad \mathrm{Im}[ r^{- d(n)} \rme^{\rmi n \theta}]$ are in $M^{2,2}_{\sigma, \gamma}$ if 
\[ \sigma +1>   \sqrt{n^2+1}.\]

\end{proof}

%%%%%%%%%%%%% INJECTIVE %%%%%%%%%%%%%

\begin{Proposition}\label{p:fredholm2}
Let $n \in \Z$, $\sigma < -2$ and $\gamma> 0$.  Define $q(n) = \sqrt{n^2+1}$ and assume that $\sigma +1 \neq \pm q(n)$ and 
$\gamma +1 \neq \pm q(n)$. Then, the operator
$$ \Delta -1/r^2: M^{2,2}_{\sigma,\gamma}(\R^2) \longrightarrow L^2_{\sigma+2, \gamma+2}(\R^2).$$
is injective with a finite dimensional co-kernel that depends on $\gamma$ and $\sigma$. More precisely,
letting $k, n, m \in \N \cup \{0\}$, we have that
\begin{enumerate}[i)]
\item if $  q(n)< \gamma +1 < q(n+1)$ the co-kernel includes
\[ N_n = \left\{ r^{q(k)} \cos( k \theta),\; r^{q(k)} \sin( k \theta) \Big | \;  \quad k = 0,1,2, \cdots n \right \}.\]
 \item In addition, if $ - q(m+1)  < \sigma +1< - q(m)$, the co-kernel is also composed of
 \[ M_m =\left\{  r^{-q(k)} \cos( k \theta),\; r^{-q(k)} \sin( k \theta) \Big | \; \quad k = 0,1,2, \cdots m \right\}.\]
\end{enumerate}
If $\gamma +1 = \pm q(n)$ or $\sigma +1 \neq \pm q(n) $ for some $n \in \Z$, the operator does not have a closed range.
\end{Proposition}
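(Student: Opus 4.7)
The plan is to deduce this result from Proposition \ref{p:fredholm1} by a duality argument, mirroring the passage from Proposition \ref{p:injective} to Proposition \ref{p:surjective} in Section \ref{s:helmholtz}. Because $\Delta - 1/r^2$ is formally self-adjoint under the unweighted $L^2$ pairing, the regime $\sigma<-2$, $\gamma>0$ is the exact reflection of the regime $\sigma>-2$, $\gamma<0$ already treated in Proposition \ref{p:fredholm1}.

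First, I would introduce the extended operator
\[
(\Delta - 1/r^2)^{\dagger} : L^2_{\sigma,\gamma}(\R^2) \longrightarrow \bigl(M^{2,2}_{-\sigma-2,-\gamma-2}(\R^2)\bigr)^{*},
\]
where the codomain is identified via the standard $L^2$ pairing, so that $L^2_{\sigma+2,\gamma+2}(\R^2)$ embeds continuously into it. The formal adjoint of this extended operator is
\[
(\Delta - 1/r^2)^{*} : M^{2,2}_{\tilde\sigma,\tilde\gamma}(\R^2) \longrightarrow L^2_{-\sigma,-\gamma}(\R^2),
\]
with $\tilde\sigma := -\sigma-2>-2$ and $\tilde\gamma := -\gamma-2<0$, which places the adjoint precisely in the hypotheses of Proposition \ref{p:fredholm1}. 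The weight conditions translate exactly: $q(n)<\gamma+1<q(n+1)$ becomes $-q(n+1)<\tilde\gamma+1<-q(n)$, and $-q(m+1)<\sigma+1<-q(m)$ becomes $q(m)<\tilde\sigma+1<q(m+1)$. Proposition \ref{p:fredholm1} therefore yields surjectivity of the adjoint with kernel spanned by $N_n\cup M_m$, and by Hilbert-space duality the extended operator $(\Delta-1/r^2)^\dagger$ is injective, has closed range, and has cokernel spanned by $N_n\cup M_m$.

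To transfer back to the small operator on $M^{2,2}_{\sigma,\gamma}(\R^2)$, I would follow the template of Proposition \ref{p:surjective}. Given a sequence $\{u_k\}\subset M^{2,2}_{\sigma,\gamma}$ with $\|u_k\|_{M^{2,2}_{\sigma,\gamma}}=1$ and $(\Delta-1/r^2)u_k\to 0$ in $L^2_{\sigma+2,\gamma+2}$, the continuous embedding into the extended codomain combined with closed range of $(\Delta-1/r^2)^\dagger$ produces convergence of a correction in $L^2_{\sigma,\gamma}$; the $M^{2,2}$-analogue of the interpolation estimate
\[
\|u\|_{M^{2,2}_{\sigma,\gamma}} \le C \left( \|(\Delta-1/r^2)u\|_{L^2_{\sigma+2,\gamma+2}} + \|u\|_{L^2_{\sigma,\gamma}} \right),
\]
which is the counterpart of Lemma \ref{l:interpolation}, then promotes this to convergence in $M^{2,2}_{\sigma,\gamma}$, proving closed range. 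Injectivity is then automatic: the only candidate kernel elements are $r^{\pm q(k)}\cos(k\theta)$ and $r^{\pm q(k)}\sin(k\theta)$, but $r^{q(k)}$ grows too fast at infinity to lie in $M^{2,2}_{\sigma,\gamma}$ when $\gamma>0$, and $r^{-q(k)}$ is too singular at the origin when $\sigma<-2$. The non-closed range statement for $\sigma+1=\pm q(n)$ or $\gamma+1=\pm q(n)$ is inherited from the center-eigenspace Weyl sequence construction in the proof of Proposition \ref{p:fredholm1}, applied to the adjoint, whose failure of closed range forces the same on the original operator.

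The main obstacle is the careful identification of the duality pairing, since the derivative-dependent weight shifts in the $M$-norm mean $(M^{2,2}_{\sigma,\gamma})^*$ is not a standard weighted Sobolev space but rather a space of distributions of the form $\sum_{|\alpha|\le 2}D^\alpha(w_\alpha\cdot b^{-\sigma-|\alpha|}\langle x\rangle^{-\gamma-|\alpha|})$ with $w_\alpha\in L^2$. A second technical point is verifying the $M^{2,2}$-analogue of the interpolation inequality used in the closing argument; this should follow from the elliptic regularity estimates collected in the appendix, but requires the weights to shift correctly with derivative order.
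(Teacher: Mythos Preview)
Your proposal is correct in outline but takes a genuinely different route from the paper. The paper does \emph{not} argue by duality here; instead it proves Proposition~\ref{p:fredholm2} directly, repeating the ODE analysis of Proposition~\ref{p:fredholm1}. One defines the candidate range
\[
R = \{ f \in L^2_{\sigma+2,\gamma+2}(\R^2) : \langle f, h\rangle = 0,\ h \in N_n \cup M_m\},
\]
and shows it equals the image of the operator by solving $\Delta_{k^2+1}u_k = f_k$ mode by mode via the logarithmic substitution $\tau = \ln r$. The new wrinkle compared with Proposition~\ref{p:fredholm1} is that for $|k|\le n$ (at infinity) or $|k|\le m$ (near the origin) both eigenvalues $\lambda_\pm = \beta \pm q(k)$ have the wrong sign, so the naive variation-of-constants integral is unbounded; the paper uses the orthogonality conditions $\int g(\tau)\rme^{-\lambda_-\tau}\,d\tau = 0$ encoded in $f\in R$ to flip the limits of integration and recover an $L^2$-bounded solution.

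Your duality argument is more economical---it recycles Proposition~\ref{p:fredholm1} wholesale and avoids the mode-by-mode bookkeeping---but, as you acknowledge, it trades that for two technical obligations the paper does not incur: identifying $(M^{2,2}_{-\sigma-2,-\gamma-2})^*$ concretely enough to justify the embedding of $L^2_{\sigma+2,\gamma+2}$ into it, and proving the full $M^{2,2}$ a~priori estimate $\|u\|_{M^{2,2}_{\sigma,\gamma}} \le C(\|(\Delta-1/r^2)u\|_{L^2_{\sigma+2,\gamma+2}} + \|u\|_{L^2_{\sigma,\gamma}})$, which is stronger than Lemma~\ref{l:interpolation} alone (you also need the first-derivative bound, the $M$-space analogue of Lemma~\ref{l:interpolation_2}). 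Both are obtainable by the same integration-by-parts techniques, so your route is viable; the paper's direct approach simply keeps everything inside the framework already built for Proposition~\ref{p:fredholm1}.
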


\begin{proof}
Fix $n, m \in \N \cup \{0\}$ and assume that the weights $\gamma$ and $\sigma$ satisfy  inequalities i) and ii), as stated in the Proposition. Define
\[ R = \{  f\in L^2_{\sigma+2, \gamma+2}(\R^2) \mid \langle f, h\rangle =0, \; h  \in N_n \cup M_n  \}\]
and notice that this set  is closed. Indeed, this follows since the functions $h \in N_n \cup M_m$ define bounded linear functionals, so that the set $R$ corresponds to the union of their nullspace.

We now show that the range of the operator is given by $R$, proving the statement of the Proposition.
Let $f \in R$ and write 
\[ (\Delta - \frac{1}{r^2} ) u =  \sum_k \Delta_{k^2+1} u_k \; \rme^{\rmi k \theta} = \sum_k f_k \rme^{\rmi k \theta}. \]
 As in the proof of Proposition \ref{p:fredholm1}, we show that  for our choice of $\sigma$ and $\gamma$, the inequalities 
\begin{align}\label{e:ineq_11}
\| u_k \|_{M^{1,2}_{r, \sigma, \gamma}(\R^2 \setminus B_1) }& \leq C \| f_k \|_{L^2_{\sigma+2, \gamma+2}(\R^2 \setminus B_1)  }\\ 
\label{e:ineq_12}
 \| u_k \|_{M^{1,2}_{r, \sigma, \gamma}(B_1) } &\leq C \| f_k \|_{L^2_{\sigma+2, \gamma+2}(B_1)},
\end{align}
hold, where now the constant $C$  depends on the choice of fixed $n$ and $m$.  The result of the Proposition then follow by invoking relation \eqref{e:directsum1}, inequality \eqref{e:interpolation} and the equation $(\Delta - \frac{1}{r^2} ) u = f$.

To show inequality \eqref{e:ineq_11} we again use the change of coordinates $\tau = \ln r$, with $r \in [1,\infty)$, and take
\[ u_k = w \rme^{-\beta \tau}, \quad \beta = \gamma+1, \quad \mbox{and} \quad f_k = g\rme^{-\alpha \tau}, \quad \alpha = \gamma +3.\]
The equation $ \Delta_{k^2+1} u_k =f_k$ can then be written as a first order system with constant coefficients (see \eqref{e:first_order}), with corresponding eigenvalues, 
$ \lambda_{\pm} = \beta \pm \sqrt{k^2+1}.$

For values of $\gamma$ satisfying
$$q(k) < \gamma < q(k+1),$$
 where $q(k) = \sqrt{k^2+1}$, we have that the eigenvalue $\lambda_- = \beta - q(k) <0$ for all $
|k| \geq |n|+1$. Therefore, for these $k$ values, we may write a solution to system  \eqref{e:first_order} using the variation of constants formula
\[  \rme^{ -\lambda_- \tau}W(\tau) - \lim_{s \to -\infty}  \rme^{- \lambda_- s} W( s) =  \int_{-\infty }^\tau \rme^{- \lambda_-  t} P^s G(t) \;dt,\]
where $P^s$ is the projection onto the eigenspace of $\lambda_-$.
Because by assumption $\sigma <-2$, the functions $w$ and $w_\tau$ are $ \in L^1(-\infty,0)$, so that the limit appearing in the above expression tends to zero. We may therefore write
\begin{equation}\label{e:vcf1}
  W( \tau ) =  \int_{-\infty}^\tau \rme^{\lambda_- (\tau - t)} P^s G(t) \;dt.
  \end{equation}
To show that  $w \in H^1(0, \infty)$ we use Young's inequality, resulting in expression \eqref{e:ineq_11} with $C = \frac{-1}{\gamma +1 - q(k) } \leq \frac{-1}{\gamma +1 - q(n+1)}$.
 
On the other hand, notice that for integer values satisfying $ |k| < |n|+1$, both eigenvalues $\lambda_{\pm}$ are positive. Consequently,  the variation of constants formula shown above does not lead directly  to a solution $w$ in $H^1(0,\infty)$.
 To remedy this, we use the fact that $f \in R$ in order to construct an alternative solution.
 More precisely, since $G(s) = (0,g(s))$ and
\begin{align*}
 \langle f, r^{q(k)} \rme^{\rmi k \theta} \rangle = &\int_0^1 f_k  r^{q(k)}  r\;dr\\
0= & \int_{-\infty}^\infty g(\tau) \rme^{-\alpha \tau} \rme^{q(k) \tau} \rme^{2 \tau} \; d\tau\\
0 = & \int_{-\infty}^\infty g(\tau)  \rme^{-( \beta - q(k)) \tau}  \; d\tau\\
0 = & \int_{-\infty}^\infty g(\tau)  \rme^{-\lambda_- \tau}  \; d\tau
\end{align*}
we may re-write expression \eqref{e:vcf1} as
\[ W( \tau ) =  \int_{\infty}^\tau \rme^{\lambda_- (\tau - t)} P^s G(t) \;dt.\]
We can now apply Young's inequality  in order to arrive at \eqref{e:ineq_11}, this time  with  $C = \frac{1}{\gamma +1 - q(k)} \leq \frac{1}{\gamma+1 - q(n+1)}$. Putting both results together, we conclude that inequality \eqref{e:ineq_11} holds for all integers $k$, so long as we pick $C = \max\{  \frac{1}{\gamma+1 - q(n+1)}, \frac{-1}{\gamma +1 - q(n+1)}\}$.

Next, to show inequality \eqref{e:ineq_12} we let  $\tau = \ln r$, with $r \in (0,1]$, and define
\[ u_n = w \rme^{-\beta \tau}, \quad \beta = \sigma+1, \quad \mbox{and} \quad f_n = g\rme^{-\alpha \tau}, \quad \alpha = \sigma +3.\]
A similar analysis as above then leads to the desired result with the constant in the inequality satisfying $C = \max\{ \frac{-1}{\sigma+1 +q(m)}, \frac{1}{\sigma+1 + q(m+1)}\}$.

Finally, as in the previous proposition, notice that when $\sigma = \pm \sqrt{n^2+1}$ or $\gamma = \pm \sqrt{n^2+1}$, one of the eigenvalues $\lambda_{\pm}$ is zero. We can then use elements in this center eigenspace to construct Weyl sequences, proving that the operator does not have a closed range.

\end{proof}

%%%%%%%%%%%%% MIXED  %%%%%%%%%%%%%
Similar proofs to the ones shown above lead to the next two propositions.

\begin{Proposition}\label{p:fredholm3}
Let $n \in \Z$, $\sigma > -2$ and $\gamma> 0$. Let $q(n) = \sqrt{n^2+1}$ and assume that $\sigma +1 \neq \pm q(n)$ and 
$\gamma +1 \neq \pm q(n) $. Then, the operator
$$ \Delta -1/r^2: M^{2,2}_{\sigma,\gamma}(\R^2) \longrightarrow L^2_{\sigma+2, \gamma+2}(\R^2).$$
is Fredholm with a finite dimensional kernel and co-kernel that depends on $\gamma$ and $\sigma$.
More precisely,
letting $k, n, m \in \N \cup \{0\}$ we have that
\begin{enumerate}[i)]
\item if $  q(n) < \gamma +1<q(n+1)$ the co-kernel includes
\[ N_n = \left\{ r^{q(k)} \cos( k \theta),\; r^{q(k)} \sin( k \theta) \Big | \;  \quad k = 0,1,2, \cdots n \right \}.\]
 \item In addition, if $  q(m)< \sigma +1<q(m+1)$, the kernel is composed of
 
 \[ M_m =\left\{  r^{-q(k)} \cos( k \theta),\; r^{-q(k)} \sin( k \theta) \Big | \;  \quad k = 0,1,2, \cdots m \right\}.\]
\end{enumerate}
If $\gamma +1 = \pm q(n)$ or $\sigma +1 \neq \pm q(n)$ for some $n \in \Z$, the operator does not have a closed range.
\end{Proposition}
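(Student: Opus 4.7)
The plan is to follow the pattern of Propositions \ref{p:fredholm1} and \ref{p:fredholm2}, combining features of both since the hypotheses $\sigma>-2$ and $\gamma>0$ simultaneously produce obstructions at the origin and at infinity. Decompose $u=\sum_k u_k(r)\rme^{\rmi k\theta}$ and $f=\sum_k f_k(r)\rme^{\rmi k\theta}$ via \eqref{e:directsum1} and analyze each radial equation $\Delta_{k^2+1}u_k=f_k$ on the interior $r\in(0,1]$ and the exterior $r\in[1,\infty)$ separately. Introducing $\tau=\ln r$ together with $u_k=w\rme^{-\beta\tau}$ and $f_k=g\rme^{-(\beta+2)\tau}$, with $\beta=\sigma+1$ on the interior and $\beta=\gamma+1$ on the exterior, reduces each region to the constant-coefficient first-order system \eqref{e:first_order} whose matrix has eigenvalues $\lambda_\pm=\beta\pm q(k)$; the chosen weights are exactly those that transform the weighted $L^2$ norm of $u_k$ into the flat $L^2$ norm of $w$.

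The eigenvalue bookkeeping is the heart of the argument. On the exterior $\lambda_+>0$ for every $k$, while $\lambda_-=\gamma+1-q(k)<0$ iff $|k|\geq n+1$; for $|k|\leq n$ both eigenvalues are positive and no homogeneous solution decays at $+\infty$. On the interior $\lambda_+>0$ for every $k$, while $\lambda_-=\sigma+1-q(k)>0$ iff $|k|\leq m$; for such $k$ both homogeneous solutions decay at $-\infty$, giving an extra decaying direction. For modes in the complementary range (i.e.\ $|k|\geq n+1$ on the exterior and $|k|\geq m+1$ on the interior) the analysis is the direct analogue of Proposition \ref{p:fredholm1}: the variation-of-constants formula based at $\tau=0$ produces a solution and Young's inequality yields $\|w\|_{H^1}\leq C\|g\|_{L^2}$ with $C$ controlled by $|\gamma+1-q(k)|$ or $|\sigma+1-q(k)|$, both of which stay uniformly bounded below under the standing hypothesis $\gamma+1,\sigma+1\neq\pm q(k)$.

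The two degenerate classes of modes produce, respectively, the cokernel and the kernel. For $|k|\leq n$ on the exterior, the orthogonality condition $\int_0^\infty f_k(r)\,r^{q(k)}\,r\,dr=0$, built into $R=\{f\in L^2_{\sigma+2,\gamma+2}:\langle f,h\rangle=0,\ \forall h\in N_n\}$, lets one re-center the variation-of-constants integral at $+\infty$ exactly as in Proposition \ref{p:fredholm2}; both exponential kernels $\rme^{\lambda_\pm(\tau-t)}$ then become decaying convolution kernels on $\{t>\tau\}$, and Young's inequality delivers the $H^1$ bound. For $|k|\leq m$ on the interior, the two-parameter family of solutions decaying at $-\infty$ is used to match the exterior solution at $\tau=0$, and the resulting residual in each such $k$ assembles into the claimed kernel $M_m$ (via its real and imaginary parts). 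The per-mode estimates then sum through \eqref{e:directsum1}, and the interpolation inequality from Lemma \ref{l:interpolation} promotes them to the full $M^{2,2}_{\sigma,\gamma}$ bound on $u$ modulo the kernel; closedness of $R$ follows because each $h\in N_n$ defines a bounded linear functional on $L^2_{\sigma+2,\gamma+2}$ under our hypotheses. The non-closed-range statement when $\sigma+1$ or $\gamma+1$ equals $\pm q(k)$ is obtained, as in the previous propositions, by constructing Weyl sequences from the center eigenspace of \eqref{e:first_order}.

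The main obstacle is the combinatorial bookkeeping at the matching point $\tau=0$: one must verify that the dimensions $(d_-,d_+)$ of decaying subspaces on the two sides pair in the manner predicted by the index count $d_-+d_+-2$, so that the kernel is exactly $M_m$ and the cokernel exactly $N_n$ with no spurious overlap or leftover obstructions. Uniformity of the constants in $k$, needed to sum the modal estimates, is automatic for large $|k|$ since the spectral gaps grow linearly in $k$; the finitely many small-$|k|$ modes are handled one at a time, using the gap hypothesis on $\sigma+1$ and $\gamma+1$ to keep $|\lambda_\pm|$ bounded away from zero.
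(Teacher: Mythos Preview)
Your proposal is correct and takes essentially the same approach as the paper, which itself gives no detailed argument and simply writes ``Similar proofs to the ones shown above lead to the next two propositions.'' You have correctly identified that the case $\sigma>-2,\ \gamma>0$ is handled by grafting the interior analysis of Proposition~\ref{p:fredholm1} (producing the kernel $M_m$ from the extra decaying direction when $|k|\le m$) onto the exterior analysis of Proposition~\ref{p:fredholm2} (producing the cokernel $N_n$ via the orthogonality re-centering when $|k|\le n$), with the remaining modes treated exactly as in Proposition~\ref{p:fredholm1}; your remarks on uniformity of the Young-inequality constants and on the Weyl-sequence construction for the borderline weights are likewise in line with the paper's treatment.
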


\begin{Proposition}\label{p:fredholm4}
Let $n \in \Z$, $\sigma < -2$ and $\gamma< 0$.  Define $q(n) = \sqrt{n^2+1}$ and assume that $\sigma +1 \neq \pm q(n) $ and 
$\gamma +1 \neq \pm q(n) $. Then, the operator
$$ \Delta -1/r^2: M^{2,2}_{\sigma,\gamma}(\R^2) \longrightarrow L^2_{\sigma+2, \gamma+2}(\R^2).$$
is Fredholm with a finite dimensional kernel and co-kernel that depends on $\gamma$ and $\sigma$. 
More precisely,
letting $k, n, m \in \N \cup \{0\}$ we have that
\begin{enumerate}[i)]
\item if $  -q(n+1) < \gamma +1 <- q(n)$ the kernel includes
\[ N_n = \left\{ r^{q(k)} \cos( k \theta),\; r^{q(k)} \sin( k \theta) \Big | \;  \quad k = 0,1,2, \cdots n \right \}.\]
 \item In addition, if $ - q(m+1) < \sigma +1< - q(m) $, the co-kernel is also composed of
 \[ M_m =\left\{  r^{-q(k)} \cos( k \theta),\; r^{-q(k)} \sin( k \theta) \Big | \;  \quad k = 0,1,2, \cdots m \right\}.\]
\end{enumerate}
If $\gamma +1 = \pm q(n) $ or $\sigma +1 \neq \pm q(n) $ for some $n \in \Z$, the operator does not have a closed range.
\end{Proposition}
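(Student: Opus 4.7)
The plan is to mirror the combined strategies of Propositions \ref{p:fredholm1} and \ref{p:fredholm2}: in the regime $\sigma<-2$, $\gamma<0$, the far-field ODE (for low Fourier modes) has a two-dimensional stable subspace that produces the kernel $N_n$, while the near-field ODE (also for low Fourier modes) lacks an unstable direction, producing the co-kernel $M_m$. As a first step I would fix $n,m\in\N\cup\{0\}$ satisfying the hypotheses, define
\[ R = \{ f\in L^2_{\sigma+2,\gamma+2}(\R^2)\mid \langle f, h\rangle = 0\ \text{for all}\ h \in M_m\},\]
check that each $h\in M_m$ induces a bounded linear functional on $L^2_{\sigma+2,\gamma+2}(\R^2)$ (so that $R$ is the intersection of closed hyperplanes, hence closed), and aim to show that the range of $\Delta - 1/r^2$ coincides with $R$ and the kernel is spanned by $N_n$.

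Next I would Fourier-decompose via \eqref{e:directsum1} and reduce to the scalar ODEs $\Delta_{k^2+1}u_k = f_k$, then apply the rescaling $\tau=\ln r$, $u_k = w\,\rme^{-\beta\tau}$, $f_k = g\,\rme^{-(\beta+2)\tau}$ on each region. This produces the constant-coefficient ODE \eqref{e:ode_w} with eigenvalues $\lambda_{\pm} = \beta \pm q(k)$. For the far-field analogue of \eqref{e:ineq_11}, with $\beta = \gamma+1$, the assumption $\gamma+1 < 0$ forces $\lambda_- < 0$ for every $k$; for $k\geq n+1$ variation of constants from $\tau = 0$ along the stable direction together with Young's inequality gives a bound with constant depending only on $n$, while for $k\leq n$ both $\lambda_\pm$ are negative, so the homogeneous solutions $\rme^{\lambda_\pm \tau}$ (corresponding to $r^{\pm q(k)}$) decay at $+\infty$ and provide the two-parameter family whose intersection with the weighted domain accounts for the kernel $N_n$; one uses this freedom to fix a representative of the coset.

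For the near-field analogue of \eqref{e:ineq_12} with $\beta = \sigma+1 < -q(m)$, one has $\lambda_- < 0$ for all $k$ and $\lambda_+ > 0$ only when $k \geq m+1$. The case $k \geq m+1$ is handled as in Proposition \ref{p:fredholm1} via
\[ W(\tau) = \int_{-\infty}^\tau \rme^{\lambda_+(\tau-t)} P^u G(t)\, dt,\]
and Young's inequality delivers the bound. The main obstacle is the range $k \leq m$, where both eigenvalues are negative and the homogeneous solutions blow up as $\tau \to -\infty$; in general no $H^1(-\infty, 0]$ solution exists. This is exactly where the hypothesis $f \in R$ becomes essential. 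Adapting the trick used in the proof of Proposition \ref{p:fredholm2}, the orthogonality $\int_0^\infty f_k(r)\,r^{-q(k)}\,r\,dr = 0$ translates, in the rescaled variables, into $\int_{-\infty}^0 g(s)\,\rme^{-\lambda_- s}\,ds = 0$, which allows one to shift the variation-of-constants representation to
\[ W(\tau) = \int_{-\infty}^\tau \rme^{\lambda_-(\tau-t)} P^s G(t)\, dt,\]
producing a solution in $H^1(-\infty,0]$ with the required bound by a further application of Young's inequality.

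Once both rescaled estimates are established for $u$ modulo $N_n$, the full $M^{2,2}$-bound follows as in the previous propositions by combining the interpolation inequality \eqref{e:interpolation} with the a priori estimate $\|\Delta u\|_{L^2_{\sigma+2,\gamma+2}} \leq \|f\|_{L^2_{\sigma+2,\gamma+2}} + \|u\|_{L^2_{\sigma,\gamma}}$. This shows the range equals $R$ and the kernel is $N_n$. For the last claim, when $\gamma+1 = \pm q(n)$ or $\sigma+1 = \pm q(n)$ for some $n$, one of $\lambda_{\pm}$ vanishes and the corresponding center-eigenspace function $r^{\pm q(n)}\rme^{\rmi n\theta}$, suitably cut off, generates a Weyl sequence, showing that the range fails to be closed, exactly as in Propositions \ref{p:fredholm1} and \ref{p:fredholm2}.
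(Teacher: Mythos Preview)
Your overall strategy—combine the arguments of Propositions \ref{p:fredholm1} and \ref{p:fredholm2} via mode decomposition, the substitution $\tau=\ln r$, variation of constants, and Young's inequality—is exactly what the paper intends when it says ``similar proofs\dots lead to'' this proposition. However, your treatment of the near-field for the low modes $k\le m$ contains a genuine conceptual error.

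You assert that when both $\lambda_\pm=(\sigma+1)\pm q(k)$ are negative, ``in general no $H^1(-\infty,0]$ solution exists'' and that the orthogonality $f\perp r^{-q(k)}$ is required to produce one. This is backwards: for $\lambda<0$ the scalar map $\partial_\tau-\lambda:H^1(-\infty,0]\to L^2(-\infty,0]$ is \emph{bijective}, the unique solution being $w(\tau)=\int_{-\infty}^\tau \rme^{\lambda(\tau-t)}g(t)\,dt$, which converges and is bounded by Young precisely because $\lambda<0$. No orthogonality is needed on this half-line, and your displayed formula ``$W(\tau)=\int_{-\infty}^\tau \rme^{\lambda_-(\tau-t)}P^sG\,dt$'' is valid without any hypothesis on $f$. (Separately, your formula for $k\ge m+1$, $\int_{-\infty}^\tau \rme^{\lambda_+(\tau-t)}P^uG\,dt$, diverges since $\lambda_+>0$; as in Proposition~\ref{p:fredholm1} one integrates from $0$.)

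The cokernel constraint here does not arise from either half-line problem in isolation; it comes from \emph{matching} the near- and far-field solutions at $\tau=0$. For a mode $k$ with $\lambda^\sigma_+<0$ (near-field value in the $+$ eigendirection forced) \emph{and} $\lambda^\gamma_+>0$ (far-field value in the $+$ eigendirection also forced), the two determined boundary values must coincide, which is a linear condition on $f_k$ over the \emph{whole} half-line $(0,\infty)$—namely $\int_0^\infty f_k(r)\,r^{-q(k)}\,r\,dr=0$. This is precisely the mechanism in the proof of Proposition~\ref{p:fredholm2}: note that the orthogonality there is rewritten as $\int_{-\infty}^{\infty} g(\tau)\rme^{-\lambda_-\tau}\,d\tau=0$, with the integral running over all of $\R$, and the near-field hypothesis on $\sigma$ is used only to justify the limit in the variation-of-constants formula. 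Your restriction of the orthogonality to $\int_{-\infty}^{0}$, and the identification with $\lambda_-$ rather than $\lambda_+$, both need correcting. Once you track the matching carefully you will also see that the constraint is active only for modes with $k>n$ (otherwise the far-field has two free parameters and can always match), and dually that $r^{q(k)}$ lies in $M^{2,2}_{\sigma,\gamma}$ only when $k>m$; so the precise kernel and cokernel sets in the statement merit a second look as well.
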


%%%%%%%%%%%%%%%%%%%%%%%%%%%%%%%%%%%%%
%%%%%%%%%%%%%APPENDIX %%%%%%%%%%%%%%%%%%
%%%%%%%%%%%%%%%%%%%%%%%%%%%%%%%%%%%%%

\section{Appendix}

\begin{Lemma}\label{l:interpolation}
Let  $\sigma , \gamma \in \R$, and take $u \in M^{1,2}_{\sigma, \gamma}(\R^2) $ and $\Delta u \in L^2_{\sigma+2,\gamma+2}(\R^2)$. Then, there exists a constant $C = C(\sigma, \gamma) >0$, 
such that,
\begin{equation}\label{e:interpolation}
 \|D^2 u \|_{L^2_{\sigma +2, \gamma+2}(\R^2)} \leq C \left[    \|\Delta u \|_{L^2_{\sigma +2, \gamma+2}(\R^2)} 
+ \|D u \|_{L^2_{\sigma +1, \gamma+1}(\R^2)}     \right],
\end{equation}
where $Du = \sum_{i =1}^2 u_{x_i}$ and $D^2u = \sum_{i,j =1}^2 u_{x_ix_j}$.
\end{Lemma}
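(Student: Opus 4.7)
The strategy is to obtain \eqref{e:interpolation} as a direct integration-by-parts identity relating $\int_{\R^2}|D^2 u|^2\phi^2\,dx$ to $\int_{\R^2}(\Delta u)^2\phi^2\,dx$, where $\phi(x)=b(|x|)^{\sigma+2}\langle x\rangle^{\gamma+2}$ is the weight attached to $L^2_{\sigma+2,\gamma+2}(\R^2)$, and then to control the commutator terms by the weight $\psi(x)=b(|x|)^{\sigma+1}\langle x\rangle^{\gamma+1}$ that governs $\|Du\|_{L^2_{\sigma+1,\gamma+1}}$. Because $M^{1,2}_{\sigma,\gamma}(\R^2)$ is by definition the completion of $C^\infty_0(\R^2)$ under its norm, a standard mollification-plus-truncation argument reduces the proof to the case $u\in C^\infty_0(\R^2)$; the full hypothesis of the lemma is recovered by passing to the limit, since both sides of the inequality are continuous in $(Du,\Delta u)$ with respect to the relevant weighted $L^2$-norms.

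For $u\in C^\infty_0(\R^2)$, expanding $(\Delta u)^2=\sum_{i,j} u_{x_i x_i} u_{x_j x_j}$ and integrating by parts twice (once in $x_i$ and once in $x_j$) against the weight $\phi^2$ yields the identity
\begin{equation*}
\int_{\R^2} |D^2 u|^2\,\phi^2\,dx = \int_{\R^2} (\Delta u)^2\,\phi^2\,dx + \sum_{i,j}\int_{\R^2} u_{x_i}\,u_{x_j x_j}\,(\phi^2)_{x_i}\,dx - \sum_{i,j}\int_{\R^2} u_{x_i}\,u_{x_i x_j}\,(\phi^2)_{x_j}\,dx.
\end{equation*}
Each commutator term couples one factor of $Du$ to one factor of $D^2 u$ against $\nabla(\phi^2)=2\phi\,\nabla\phi$, which is precisely the right homogeneity to sit between $\int |D^2 u|^2\phi^2$ and $\int |Du|^2\psi^2$ after a Young-type absorption.

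The main analytic ingredient is therefore the pointwise bound $|\nabla\phi(x)|\leq C(\sigma,\gamma)\,\psi(x)$, which I would verify case by case from the piecewise structure of $b$: on $|x|<1$ one has $b(|x|)=|x|$, so $\partial_{x_j}|x|^{\sigma+2}=(\sigma+2)|x|^{\sigma+1}x_j/|x|$ while $\langle x\rangle\sim 1$; on $|x|>2$ one has $b\equiv1$, and $\nabla\langle x\rangle^{\gamma+2}$ contributes $(\gamma+2)\langle x\rangle^{\gamma+1} x/\langle x\rangle$; on the gluing annulus $1\leq|x|\leq 2$ both $\phi$ and $\psi$ are smooth and mutually bounded above and below. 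In every regime this yields $|\nabla\phi|\leq C\psi$, and hence $|\nabla(\phi^2)|\leq 2C\phi\psi$ throughout $\R^2$.

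Combining this bound with Cauchy--Schwarz and Young's inequality $ab\leq\varepsilon a^2+C_\varepsilon b^2$ gives
\begin{equation*}
\Bigl|\sum_{i,j}\int_{\R^2} u_{x_i}\,u_{x_i x_j}\,(\phi^2)_{x_j}\,dx\Bigr| \leq \varepsilon \int_{\R^2} |D^2 u|^2\,\phi^2\,dx + C_\varepsilon \int_{\R^2} |Du|^2\,\psi^2\,dx,
\end{equation*}
and the same bound for the other commutator sum. Choosing $\varepsilon$ small enough to absorb the $|D^2u|^2\phi^2$-term into the left-hand side of the identity produces exactly \eqref{e:interpolation}. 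The main obstacle I anticipate is essentially bookkeeping: verifying the weight-derivative bound across the transition region $1\leq|x|\leq 2$ forces one to commit to the piecewise definition of $b$, and justifying the density step requires checking that mollification converges in both $L^2_{\sigma+1,\gamma+1}$ and $L^2_{\sigma+2,\gamma+2}$, which is routine because $\Delta$ commutes with convolution.
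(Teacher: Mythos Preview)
Your proposal is correct and follows essentially the same route as the paper: both arguments reduce to smooth compactly supported $u$, perform the double integration by parts that turns $\int(\Delta u)^2\,\text{(weight)}$ into $\int|D^2u|^2\,\text{(weight)}$ plus commutator terms of the form $\int Du\cdot D^2u\cdot\nabla(\text{weight})$, use that differentiating the weight drops exactly one power of $b(|x|)\langle x\rangle$, and then absorb via Cauchy's inequality with $\varepsilon$. The only cosmetic difference is that the paper localizes with a cutoff $\chi$ and treats the near-origin and far-field pieces with pure power weights $|x|^{2(\sigma+2)}$ and $|x|^{2(\gamma+2)}$ separately (collecting the cutoff contributions as ``loc'' terms), whereas you carry the full weight $\phi=b^{\sigma+2}\langle x\rangle^{\gamma+2}$ throughout and verify $|\nabla\phi|\le C\psi$ region by region---the underlying mechanism is identical.
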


\begin{proof}
Since $C_o^\infty(\R^2)$ is a dense subset of $M^{2,2}_{\sigma, \gamma}(\R^2)$, there is a smooth function  $\tilde{u}$  that is close to $u$ in the  $ M^{1,2}_{\sigma, \gamma}(\R^2) $  norm. For ease of notation, we let $u =\tilde{u}$ denote this smooth function.

Letting $\chi \in C^\infty(\R_+)$ denote a cut-off function with $\chi(|x|) =1$ for $|x|<1$ and $\chi(|x|) =0 $ for $|x|>2$, we write
\begin{align*}
 \| \Delta u\|^2_{L^2_{\sigma +2, \gamma +2} }  
 &= \int_{\R^2} | \Delta u|^2 \langle  x \rangle^{2 (\gamma +2)} b(|x|)^{2( \sigma +2)} \;dx,\\
 & \leq \int_{\R^2} \chi | \Delta u|^2 \; |x|^{2( \sigma +2)} \;dx +   \int_{\R^2}(1- \chi) | \Delta u|^2 \; |x|^{2 (\gamma +2)}  \;dx.
 \end{align*}

We start with the first integral, which leads to
\begin{align*}
\int_{\R^2} \chi | \Delta u|^2 \; |x|^{2( \sigma +2)} \;dx 
= & \sum_{i,j =1}^2 \int_{\R^2}  u_{x_i x_i} u_{x_j x_j} |x|^{2(\sigma +2)} \chi \; dx\\
= & \sum_{i,j =1}^2 \int_{\R^2}  u_{x_i x_j}  u_{x_j x_i }   |x|^{2(\sigma +2)} \chi  \; dx \\
 & + 2 (\sigma +2) \sum_{i,j =1}^2 \int_{\R^2}  u_{x_i x_j}  u_{ x_i }  \; x_j |x|^{2(\sigma +2)} \chi  \; dx\\
& - 2 (\sigma +2)  \sum_{i,j =1}^2 \int_{\R^2} u_{x_i }  u_{ x_j x_j}\; x_i |x|^{2(\sigma +1)} \chi  \; dx +loc.\\
\end{align*}

 where the first inequality follows from two applications of integration by parts, and $loc$ stands for smooth localized terms.
Using the notation stated in the Lemma  and re-arranging terms,
 \begin{align*}
\int_{\R^2}  |D^2u|^2  |x|^{2(\sigma +2)} \chi  \; dx
=& \int_{\R^2} \chi | \Delta u|^2 \; |x|^{2( \sigma +2)} \;dx \\
 & +  2 (\sigma +2) \sum_{i,j =1}^2 \int_{\R^2}  u_{x_i x_j}  u_{ x_i }  \; x_j |x|^{2(\sigma +1)} \chi  \; dx\\
& - 2 (\sigma +2)  \sum_{i,j =1}^2 \int_{\R^2} u_{x_i }  u_{ x_j x_j}\; x_i |x|^{2(\sigma +1)} \chi  \; dx +loc.\\
\end{align*}
Using Cauchy's inequality with $\eps$ we obtain,
\begin{align*}
\int_{\R^2}  |D^2u|^2  |x|^{2(\sigma +2)} \chi  \; dx
\leq & \int_{\R^2} \chi | \Delta u|^2 \; |x|^{2( \sigma +2)} \;dx \\
 +  4 |(\sigma +2)| & \left[  \eps  \int_{\R^2} \chi |D^2 u|^2  |x|^{2( \sigma +2)} \;dx \right.\\
 & \left. + \frac{1}{\eps}  \int_{\R^2} \chi |D u|^2  |x|^{2( \sigma +1)} \;dx, 
 \right] +loc.
  \end{align*}
so that by picking $\eps$ small enough we arrive at
\[ \| D^2 u\|_{L^2_{\sigma+2, \gamma+2}(B_1)} \leq C(\sigma) \left[  \| \Delta u\|_{L^2_{\sigma+2, \gamma+2}(B_1)} 
+ \| Du\|_{L^2_{\sigma+1, \gamma+1} (B_1)}   \right]. \]

A similar analysis leads to
\[ \| D^2 u\|_{L^2_{\sigma+2, \gamma+2}(\R^2 \setminus B_1)} \leq C(\gamma) \left[  \| \Delta u\|_{L^2_{\sigma+2, \gamma+2}(\R^2 \setminus B_1)} 
+ \| Du\|_{L^2_{\sigma+1, \gamma+1} (\R^2 \setminus B_1)}   \right], \]
from which the results of the Lemma then follow.
\end{proof}

%%%%%%%%%%%%%%%%%%%%%%%%%%%%%%%%%%%%%%%%%%%%

\begin{Lemma}\label{l:interpolation_2}
Let  $\sigma , \gamma \in \R$, and take $u \in H^{1,2}_{\sigma, \gamma}(\R^2) $ and $(\Delta  -\Id) u \in L^2_{\sigma+2,\gamma}(\R^2)$. Then, there exists a constant $C = C(\sigma, \gamma) >0$, 
such that,
\begin{equation}\label{e:interpolation_2}
 \| u \|_{H^2_{\sigma , \gamma}(\R^2)} \leq C \left[    \|(\Delta -\Id) u \|_{L^2_{\sigma +2, \gamma}(\R^2)} 
+ \|u \|_{L^2_{\sigma , \gamma}(\R^2)}     \right].
\end{equation}
\end{Lemma}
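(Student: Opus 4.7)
The plan is to establish inequality \eqref{e:interpolation_2} first for $u \in C_0^\infty(\R^2)$, and then extend to all $u$ satisfying the hypotheses by a standard mollification plus cut-off argument: the sequence $u_n = \chi_n(u \ast \varphi_{1/n})$, with $\chi_n \in C_0^\infty$ equal to one on $B_n$ and $\varphi_{1/n}$ a radially symmetric mollifier, converges to $u$ in $H^1_{\sigma,\gamma}$ and satisfies $(\Delta - \Id)u_n \to (\Delta - \Id)u$ in $L^2_{\sigma+2,\gamma}$, so the estimate passes to the limit. Since $\|u\|_{L^2_{\sigma,\gamma}}$ already appears on the right-hand side, I only need to control $\|Du\|_{L^2_{\sigma+1,\gamma}}$ and $\|D^2 u\|_{L^2_{\sigma+2,\gamma}}$ by the right-hand side. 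Throughout I write $f = (\Delta - \Id)u$.

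To bound the first derivative, test the equation against $u\, W$ with $W = b^{2(\sigma+1)}\langle x\rangle^{2\gamma}$. One integration by parts on the Laplacian yields the identity
\begin{equation*}
\int_{\R^2} |\nabla u|^2 \, W \, dx + \int_{\R^2} |u|^2 \, W \, dx \;=\; -\int_{\R^2} f \, u \, W \, dx \;-\; \int_{\R^2} u \, \nabla u \cdot \nabla W \, dx.
\end{equation*}
The key feature is that the $-\Id$ in the operator contributes the \emph{positive} term $\int |u|^2 W\, dx$ on the left-hand side, supplying the coercivity needed at infinity. Applying Cauchy--Schwarz to the first term on the right, with the weight split $W = b^{\sigma+2}\langle x\rangle^\gamma \cdot b^\sigma \langle x\rangle^\gamma$, gives a bound by $\|f\|_{L^2_{\sigma+2,\gamma}} \|u\|_{L^2_{\sigma,\gamma}}$; for the second, the pointwise estimates $|\partial_{x_j}b^{2(\sigma+1)}| \le C b^{2\sigma+1}$, $|\partial_{x_j}\langle x\rangle^{2\gamma}| \le C\langle x\rangle^{2\gamma-1}$, together with the elementary inequality $b^2 \le C\langle x\rangle^2$ on $\R^2$, produce a term of the form $C\|Du\|_{L^2_{\sigma+1,\gamma}}\|u\|_{L^2_{\sigma,\gamma}}$. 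Applying Cauchy's inequality with $\eps$ and absorbing the small $\|Du\|^2$ term into the left-hand side yields
\begin{equation*}
\|Du\|_{L^2_{\sigma+1,\gamma}}^2 \le C\bigl(\|f\|_{L^2_{\sigma+2,\gamma}}^2 + \|u\|_{L^2_{\sigma,\gamma}}^2\bigr).
\end{equation*}

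For the second derivative, I follow the integration-by-parts scheme already used in Lemma \ref{l:interpolation}, but with the weight $b^{2(\sigma+2)}\langle x\rangle^{2\gamma}$ in place of $b^{2(\sigma+2)}\langle x\rangle^{2(\gamma+2)}$. Writing out $\int |\Delta u|^2 b^{2(\sigma+2)}\langle x\rangle^{2\gamma}\,dx$ as a sum over mixed partials and integrating by parts twice produces $\int |D^2u|^2 b^{2(\sigma+2)}\langle x\rangle^{2\gamma}\,dx$ plus cross terms of the form $\int u_{x_ix_j}\,u_{x_k}\,\partial_{x_\ell}[b^{2(\sigma+2)}\langle x\rangle^{2\gamma}]\,dx$; using the same weight-derivative estimates as above, Cauchy's inequality with $\eps$ absorbs these into $\eps\|D^2u\|_{L^2_{\sigma+2,\gamma}}^2 + C_\eps\|Du\|_{L^2_{\sigma+1,\gamma}}^2$. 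Combining with $\Delta u = f + u$ and the pointwise bound $b^{\sigma+2} \le C b^\sigma$, which is valid because $b$ is uniformly bounded on $\R^2$, gives $\|\Delta u\|_{L^2_{\sigma+2,\gamma}} \le \|f\|_{L^2_{\sigma+2,\gamma}} + C\|u\|_{L^2_{\sigma,\gamma}}$, and the previously obtained bound on $\|Du\|$ closes the argument. The main technical obstacle is the bookkeeping of these weight-derivative cross terms: unlike in Lemma \ref{l:interpolation}, here only the $b$-exponent---and not the $\langle x\rangle$-exponent---increases with derivative order, so one must verify that the cross terms generated by differentiating $\langle x\rangle^{2\gamma}$ remain absorbable. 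This works out because each derivative of $\langle x\rangle^\gamma$ produces a factor of $\langle x\rangle^{-1}$, and at infinity this is precisely the room needed to control the cross terms by $\|Du\|_{L^2_{\sigma+1,\gamma}}$ rather than something larger.
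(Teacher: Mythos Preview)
Your proof is correct and follows essentially the same route as the paper's: both bound $\|Du\|_{L^2_{\sigma+1,\gamma}}$ by testing the equation against $u$ times an appropriate weight and integrating by parts, and both bound $\|D^2u\|_{L^2_{\sigma+2,\gamma}}$ by replaying the integration-by-parts identity of Lemma~\ref{l:interpolation} with the $H$-space weight. The only cosmetic differences are that the paper first reduces to the pure Laplacian via the triangle inequality and the embedding $L^2_{\sigma,\gamma}\subset L^2_{\sigma+2,\gamma}$, and then splits into $B_1$ and $\R^2\setminus B_1$ with pure power weights in each region, whereas you work globally with the composite weight $b^{2(\sigma+1)}\langle x\rangle^{2\gamma}$ and keep the $-\Id$ term throughout; neither choice changes the substance of the argument.
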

\begin{proof}
We show  that
\begin{equation}\label{e:ineq_delta}
  \| u \|_{H^2_{\sigma , \gamma}(\R^2)} \leq C\left[   \|\Delta  u \|_{L^2_{\sigma +2, \gamma}(\R^2)} 
+ \|u \|_{L^2_{\sigma , \gamma}(\R^2)}     \right].
\end{equation}
The result of the lemma then follow from the triangle inequality
\[  \|\Delta u \|_{L^2_{\sigma +2, \gamma}(\R^2)}  \leq  \|(\Delta -\Id) u \|_{L^2_{\sigma +2, \gamma}(\R^2)}  +  \| u \|_{L^2_{\sigma +2, \gamma}(\R^2)} \]
and the embedding $L^2_{\sigma,\gamma}(\R^2) \subset L^2_{\sigma+2,\gamma}(\R^2) $

To prove the result, we split inequality \eqref{e:ineq_delta} into two parts
\begin{align}\label{e:ineq_a}
 \| u \|_{H^2_{\sigma , \gamma}(B_1)} & \leq C\left[   \|\Delta  u \|_{L^2_{\sigma +2, \gamma}(B_1)} 
+ \|u \|_{L^2_{\sigma , \gamma}(B_1)}     \right] \\
\label{e:ineq_b}
 \| u \|_{H^2_{\sigma , \gamma}(\R^2\setminus B_1)} & \leq C\left[   \|\Delta  u \|_{L^2_{\sigma +2, \gamma}(\R^2\setminus B_1)} 
+ \|u \|_{L^2_{\sigma , \gamma}(\R^2\setminus B_1)}     \right].
\end{align}

We start with inequality \eqref{e:ineq_a}. Notice that from Lemma \ref{l:interpolation} we have that
\begin{equation}\label{e:ineqD2u}
 \|D^2 u \|_{L^2_{\sigma +2, \gamma}(B_1)} \leq C \left[    \|\Delta u \|_{L^2_{\sigma +2, \gamma}(B_1)} 
+ \|D u \|_{L^2_{\sigma +1, \gamma}(B_1)}     \right],
\end{equation} 
To bound the first derivatives, $Du$, we use the identity
\[ \int_{\R^2} \nabla u \cdot \nabla v  = \int_{\R^2} \Delta u v\]
which is valid for all $v \in C^2_0(B_R)$, with $B_R$ representing any fixed ball of radius $R$.
Letting $v = \chi u |x|^{2(\sigma+1)}$, where $\chi \in C^\infty(\R_+)$ is a cut-off function satisfying $\chi(|x|) = 1$ for $|x|\leq 1$ and $\chi(|x|) = 0$ for $|x|>2$, we can expand
\[ \nabla u \cdot \nabla v =  |x|^{2(\sigma+1)}  \nabla u \cdot \nabla( \chi u)+ 2(\sigma+1)  |x|^{2\sigma}  ( \nabla u \cdot x ) (\chi u).\]
Inserting this expression into the identity and rearranging terms we obtain,
\[ \int_{\R^2} \nabla u \cdot \nabla ( \chi u)\; |x|^{2(\sigma+1)} 
  = \int_{\R^2} \Delta u (\chi u)  |x|^{2(\sigma+1)} - 2(\sigma+1)   ( \nabla u \cdot x )  (\chi u)  |x|^{2\sigma}.\]
Then, using Cauchy's inequality, 
  \[ \| Du \|^2_{L^2_{\sigma+1, \gamma}(B_1)} \leq
C \left[  \| \Delta u \|_{L^2_{\sigma+2, \gamma}(B_1)} \| u \|_{L^2_{\sigma, \gamma}(B_1)}
+ 2(\sigma+1)  \| Du \|_{L^2_{\sigma+1, \gamma}(B_1)}  \| u \|_{L^2_{\sigma, \gamma}(B_1)}
\right]\]
followed by Cauchy's inequality with $\eps$,
\begin{align*}
 \| Du \|^2_{L^2_{\sigma+1, \gamma}(B_1)} \leq
C &\left[  \| \Delta u \|^2_{L^2_{\sigma+2, \gamma}(B_1)} + \| u \|^2_{L^2_{\sigma, \gamma}(B_1)} \right. \\
&\left.+ 2(\sigma+1) \eps  \| Du \|^2_{L^2_{\sigma+1, \gamma}(B_1)}  + 2(\sigma+1) \frac{1}{\eps} \| u \|^2_{L^2_{\sigma, \gamma}(B_1)}
\right],
\end{align*}
we arrive at
\begin{equation}\label{e:ineqDu} 
 \| Du \|^2_{L^2_{\sigma+1, \gamma}(B_1)} \leq C \left[   \| \Delta u \|^2_{L^2_{\sigma+2, \gamma}(B_1)} +
\| u \|^2_{L^2_{\sigma, \gamma}(B_1)}  \right], 
\end{equation}
by a suitable choice of $\eps$. Expressions\eqref{e:ineqD2u} and \eqref{e:ineqDu} then imply inequality \eqref{e:ineq_a}.

Similarly, to obtain relation \eqref{e:ineq_b}, we first use Lemma \ref{l:interpolation},
giving us
\begin{equation}\label{e:ineqD2u_b}
 \|D^2 u \|_{L^2_{\sigma +2, \gamma}(\R^2 \setminus B_1)} \leq C \left[    \|\Delta u \|_{L^2_{\sigma +2, \gamma}(\R^2 \setminus B_1)} 
+ \|D u \|_{L^2_{\sigma +1, \gamma-1}(\R^2 \setminus B_1)}     \right],
\end{equation} 
The inequality
\begin{equation}\label{e:ineqDu_b} 
 \| Du \|^2_{L^2_{\sigma+1, \gamma-1}(\R^2 \setminus B_1)} \leq C \left[   \| \Delta u \|^2_{L^2_{\sigma+2, \gamma}(\R^2 \setminus B_1)} +
\| u \|^2_{L^2_{\sigma, \gamma-2}(\R^2 \setminus B_1)}  \right], 
\end{equation}
then follows from a similar argument as the one used to derive \eqref{e:ineqDu}. 
The expression \eqref{e:ineq_b}  is then obtained by combining these last two inequalities, together with the embedding
\[ L^2_{\sigma,\gamma_1}(\R^2) \subset  L^2_{\sigma,\gamma_2}(\R^2) \]
which holds for all $\gamma_1> \gamma_2$.

\end{proof}

\bibliographystyle{plain}
\bibliography{dislocations}

\end{document}